\newtheorem{theorem}{Theorem}[section]
\newtheorem{conjecture}[theorem]{Conjecture}
\newtheorem{corollary}[theorem] {Corollary}
\newtheorem{definition}[theorem]{Definition}
\newtheorem{problem}[theorem]{Problem}
\newtheorem{proposition}[theorem]{Proposition}
\newtheorem{remark}[theorem]{Remark}
\title{This is the title}
\begin{document}
\begin{center}
{\bf{MODULAR PAULSEN PROBLEM AND MODULAR PROJECTION PROBLEM}}\\
\vspace{.3cm}
\hrule
\vspace{.3cm}
{\bf{K. MAHESH KRISHNA}} \\
Post Doctoral Fellow \\
Statistics and Mathematics Unit\\
Indian Statistical Institute, Bangalore Centre\\
Karnataka 560 059 India\\
Email: kmaheshak@gmail.com \\

Date: \today
\end{center}

\hrule
\vspace{0.5cm}
\textbf{Abstract}:  Based on the solution of \textbf{Paulsen Problem} by Kwok, Lau, Lee,  and Ramachandran [\textit{STOC'18-Proceedings of the 50th Annual ACM SIGACT Symposium on Theory of Computing, 2018}] and independently by Hamilton, and Moitra [\textit{Isr. J. Math., 2021}] we study Paulsen Problem and Projection Problem in the context  of Hilbert C*-modules. 
We show that for commutative C*-algebras, if Modular Paulsen Problem has a solution, then Modular Projection Problem also has a solution. We formulate the problem of operator scaling for matrices over C*-algebras.

\textbf{Keywords}:  Frame, C*-algebra, Hilbert C*-module, invariant basis number, Paulsen Problem, Projection Problem, Operator Scaling Problem.

\textbf{Mathematics Subject Classification (2020)}: 42C15,  46L05.
\hrule
\hrule
\section{Introduction}
We begin from the definition of frames and various special sub classes of it  for  finite dimensional Hilbert spaces which originated from the work of Duffin  and Schaeffer  \cite{DUFFIN, OLEBOOK}.
\begin{definition}\cite{HANKORNELSONLARSONWEBER, BENEDETTOFICKUS}
	A collection $\{\tau_j\}_{j=1}^n$ in  a finite dimensional Hilbert space $\mathcal{H}$ over $\mathbb{K} $ ($\mathbb{R} $ or $\mathbb{C} $) is said to be a \textbf{frame} for 	$\mathcal{H}$ if there exist  $a,b>0$ such that 
	\begin{align*}
		a\|h\|^2\leq \sum_{j=1}^{n}|\langle h,\tau_j \rangle |^2 \leq b \|h\|^2, \quad \forall h \in \mathcal{H}.
	\end{align*}
\end{definition}
\begin{definition}\cite{HANKORNELSONLARSONWEBER, BENEDETTOFICKUS}
	A frame  $\{\tau_j\}_{j=1}^n$ for a  finite dimensional Hilbert space $\mathcal{H}$  is said to be a 
	\begin{enumerate}[\upshape (i)]
		\item  \textbf{Parseval frame} if 
		\begin{align*}
			h= \sum_{j=1}^{n}\langle h,\tau_j \rangle \tau_j , \quad \forall h \in \mathcal{H}.
		\end{align*}
		\item \textbf{tight frame}  if there exists $a>0$ such that 
		\begin{align*}
			a\|h\|^2= \sum_{j=1}^{n}|\langle h,\tau_j \rangle |^2, \quad \forall h \in \mathcal{H}.
		\end{align*}
		\item \textbf{unit norm frame}   if $\|\tau_j\|=1$, for all $1\leq j\leq n$.
	\end{enumerate}
\end{definition}
From a  frame $\{\tau_j\}_{j=1}^n$ for $\mathcal{H}$ we get the following three operators.  We use $\mathcal{H}$ to denote a finite dimensional Hilbert space. We denote the identity operator on $\mathcal{H}$ by $I_\mathcal{H}.$
\begin{enumerate}[\upshape (i)]
	\item Analysis operator $\theta_\tau: \mathcal{H} \ni h \mapsto \theta_\tau h \coloneqq (\langle h, \tau_j \rangle ) _{j=1}^n\in \mathbb{K}^n$.
	\item Synthesis operator $\theta_\tau^*: \mathbb{K}^n \ni (a_j)_{j=1}^n \mapsto \theta_\tau^*(a_j)_{j=1}^n\coloneqq\sum_{j=1}^{n}a_j\tau_j \in \mathcal{H}$.
	\item Frame operator $S_\tau: \mathcal{H} \ni h \mapsto S_\tau h \coloneqq \sum_{j=1}^{n}\langle h,\tau_j \rangle \tau_j \in \mathcal{H}$.
\end{enumerate}
It then follows that  $S_\tau=\theta_\tau^*\theta_\tau$ and the  frame operator is positive and invertible. Therefore $\{S_{\tau}^{-1/2}\tau_j\}_{j=1}^n$  is a Parseval frame for $\mathcal{H}$. It is known that in this way  the construction of  Parseval frame  is difficult due to inversion and square root process of frame operator.  On the other side, given a collection,  it is easy (even not always) to verify that whether it is a frame than to verify that it is a Parseval frame. This leads to the following notions.  
\begin{definition}\cite{CAHILLCASAZZA} \label{ENF}
	A Parseval frame $\{\tau_j\}_{j=1}^n$ for a $d$-dimensional Hilbert space $\mathcal{H}$  is called an  \textbf{equal norm Parseval frame} if 
	\begin{align*}
		\|\tau_j\|^2=\frac{d}{n}, \quad \forall 1\leq j \leq n.
	\end{align*}
\end{definition}
\begin{definition}\cite{CAHILLCASAZZA}\label{NP}
	A  frame $\{\tau_j\}_{j=1}^n$ for a $d$-dimensional Hilbert space $\mathcal{H}$  is called an  \textbf{$\varepsilon$-nearly Parseval frame} ($\varepsilon<1$) if 	
	\begin{align*}
		(1-\varepsilon)I_\mathcal{H} \leq S_\tau \leq (1+\varepsilon)I_\mathcal{H}. 
	\end{align*}
\end{definition}
\begin{definition}\cite{CAHILLCASAZZA} \label{EENF}
	A  frame $\{\tau_j\}_{j=1}^n$ for a $d$-dimensional Hilbert space $\mathcal{H}$  is called an  \textbf{$\varepsilon$-nearly equal norm  frame} ($\varepsilon<1$) if 
	\begin{align*}
		(1-\varepsilon)\frac{d}{n}\leq 	\|\tau_j\|^2\leq (1+\varepsilon)\frac{d}{n}, \quad \forall 1\leq j \leq n.
	\end{align*}	
\end{definition}
\begin{definition}\cite{CAHILLCASAZZA}\label{BOTH}
	A frame for a $d$-dimensional Hilbert space $\mathcal{H}$ which is both $\varepsilon$-nearly  equal norm and  $\varepsilon$-nearly  Parseval is called as  \textbf{$\varepsilon$-nearly  equal norm Parseval frame.}
\end{definition}
Following notion is often used to measure the distance between two collections in Hilbert space.
\begin{definition}\cite{CAHILLCASAZZA}\label{DIS}
	\textbf{Distance} between  two collections $\{\tau_j\}_{j=1}^n$ and   $\{\omega_j\}_{j=1}^n$ in a Hilbert space $\mathcal{H}$ is defined as 
	\begin{align*}
		\operatorname{dist}(\{\tau_j\}_{j=1}^n, \{\omega_j\}_{j=1}^n)\coloneqq \left(\sum_{j=1}^{n}\|\tau_j-\omega_j\|^2\right)^\frac{1}{2}.
	\end{align*}
\end{definition}
Fundamental Paulsen problem now arises  from the work of Holmes and Paulsen \cite{HOLMESPAULSEN}. It is grounded on  following three results.
\begin{theorem}\cite{HOLMESPAULSEN}\label{PAULSENALGORITHM}
	There is an algorithm for turning every frame into an equal norm frame with same frame operator. 
\end{theorem}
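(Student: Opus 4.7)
The plan is to give an iterative algorithm that equalizes one frame vector per step while preserving the frame operator exactly. The target squared norm is forced by the trace: since $\operatorname{tr}(S_\tau)=\sum_{j=1}^n\|\tau_j\|^2$ is determined by $S_\tau$, any equal norm frame with the same frame operator must consist of vectors of squared norm $c\coloneqq \operatorname{tr}(S_\tau)/n$.

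The key operation is a planar (Givens-type) rotation on a pair of frame vectors. Given indices $i\neq j$ and $\theta\in\mathbb{R}$, replace $\tau_i,\tau_j$ by
\begin{align*}
\tilde{\tau}_i\coloneqq \cos\theta\,\tau_i+\sin\theta\,\tau_j, \qquad \tilde{\tau}_j\coloneqq -\sin\theta\,\tau_i+\cos\theta\,\tau_j,
\end{align*}
leaving the other vectors unchanged. Using $\cos^2\theta+\sin^2\theta=1$, a direct expansion gives $\tilde{\tau}_i\tilde{\tau}_i^{*}+\tilde{\tau}_j\tilde{\tau}_j^{*}=\tau_i\tau_i^{*}+\tau_j\tau_j^{*}$, so the frame operator $S_\tau$ is preserved. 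Moreover $\theta\mapsto\|\tilde{\tau}_i\|^2$ is continuous and takes the values $\|\tau_i\|^2$ at $\theta=0$ and $\|\tau_j\|^2$ at $\theta=\pi/2$.

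With this tool in hand the algorithm runs as follows. Maintain a set $F$ of already-fixed indices (initially empty) whose vectors satisfy $\|\tau_k\|^2=c$. If $F\neq\{1,\ldots,n\}$, the unfixed vectors carry total squared norm $\operatorname{tr}(S_\tau)-|F|c=(n-|F|)c$, so unless they are all equal to $c$ there must exist unfixed indices $i,j$ with $\|\tau_i\|^2>c>\|\tau_j\|^2$. By the intermediate value theorem one obtains $\theta^{*}\in(0,\pi/2)$ with $\|\tilde{\tau}_i(\theta^{*})\|^2=c$; update the pair and add $i$ to $F$. Since $|F|$ grows by one per step, the procedure terminates in at most $n-1$ steps, at which point every vector has squared norm $c$.

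The main obstacle is ensuring that the rotating pair can always be chosen from within the unfixed indices, so that previously equalized vectors are never disturbed. This is exactly what the trace-averaging argument provides: as long as at least one unfixed vector has $\|\tau_k\|^2\neq c$, both a strict excess and a strict deficit must appear among the unfixed vectors. The continuity of $\theta\mapsto \|\tilde{\tau}_i(\theta)\|^2$ then upgrades this from an approximate to an exact equalization, which is what distinguishes this algorithmic result from the quantitative Paulsen Problem where one starts only with an $\varepsilon$-nearly Parseval frame.
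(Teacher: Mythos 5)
Your argument is correct and is essentially the standard one behind this result: the paper itself states Theorem \ref{PAULSENALGORITHM} as a citation to \cite{HOLMESPAULSEN} without proof, and the cited algorithm is exactly this scheme of Givens-type rotations on pairs of vectors, preserving $\tau_i\tau_i^*+\tau_j\tau_j^*$ (hence $S_\tau$) while using the trace identity $\operatorname{tr}(S_\tau)=\sum_j\|\tau_j\|^2$ and the intermediate value theorem to equalize one norm per step. No gaps; the observation that the last unfixed vector is automatically of squared norm $c$ is what makes the $n-1$ step count work.
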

\begin{theorem}\cite{CASAZZACUSTOM}\label{CLOSESTEQUALNORMFRAME}
	Let $\{\tau_j\}_{j=1}^n$ be an $\varepsilon$-nearly equal norm frame for  a $d$-dimensional Hilbert space $\mathcal{H}$. Then the closest equal norm frame to $\{\tau_j\}_{j=1}^n$ is given by $\{\omega_j\}_{j=1}^n$, where	
	\begin{align*}
		\omega_j\coloneqq \left(\frac{\sum_{k=1}^{n}\|\tau_k\|}{n}\right)\frac{\tau_j}{\|\tau_j\|}, \quad \forall 1\leq j \leq n.
	\end{align*}
\end{theorem}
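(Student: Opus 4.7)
The plan is to reduce the minimization to two independent optimizations: first minimize over directions (unit vectors), then minimize over the common norm. Let me sketch this in order.

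First I would observe that any equal norm collection $\{\omega_j\}_{j=1}^n$ can be written as $\omega_j = c u_j$ where $c \geq 0$ is the common norm and $\{u_j\}_{j=1}^n$ are unit vectors. Then the squared distance expands as
\begin{align*}
\operatorname{dist}(\{\tau_j\}_{j=1}^n,\{\omega_j\}_{j=1}^n)^2 = \sum_{j=1}^n \|\tau_j - c u_j\|^2 = \sum_{j=1}^n \bigl(\|\tau_j\|^2 - 2c\,\operatorname{Re}\langle \tau_j, u_j\rangle + c^2\bigr),
\end{align*}
so for each fixed $c$ the optimization decouples across indices $j$.

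Next I would minimize each summand over $u_j$ with $\|u_j\| = 1$. By Cauchy--Schwarz $\operatorname{Re}\langle \tau_j, u_j\rangle \leq \|\tau_j\|$ with equality precisely when $u_j = \tau_j/\|\tau_j\|$. The $\varepsilon$-nearly equal norm assumption with $\varepsilon < 1$ gives $\|\tau_j\|^2 \geq (1-\varepsilon)d/n > 0$, so each $\tau_j$ is nonzero and the normalization is well-defined. Substituting this choice reduces the problem to minimizing
\begin{align*}
f(c) \coloneqq \sum_{j=1}^n (\|\tau_j\| - c)^2
\end{align*}
over $c \geq 0$. This is an elementary univariate quadratic optimization: differentiating and setting $f'(c) = 0$ yields the unique minimizer $c = \frac{1}{n}\sum_{k=1}^n \|\tau_k\|$, giving the stated formula for $\omega_j$.

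Finally I would verify that the resulting $\{\omega_j\}_{j=1}^n$ is actually a frame (not merely an equal norm collection), so that it is a legitimate competitor in the category of equal norm frames. Since each $\omega_j$ is a positive scalar multiple of $\tau_j$ and $\{\tau_j\}_{j=1}^n$ spans the finite dimensional space $\mathcal{H}$, the family $\{\omega_j\}_{j=1}^n$ also spans $\mathcal{H}$ and therefore forms a frame. The only mildly delicate point in the argument is this positivity of the common norm $c$, which follows because each $\|\tau_j\|$ is bounded below away from zero by the nearly equal norm hypothesis; beyond that the proof is routine, and no genuine obstacle arises.
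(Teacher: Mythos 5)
Your argument is correct, and it is the standard proof of this fact (the paper itself only cites the result from the Casazza reference and gives no proof to compare against). The decoupling into a direction optimization (Cauchy--Schwarz forcing $u_j=\tau_j/\|\tau_j\|$ once the common norm $c$ is positive) followed by the scalar least-squares step giving $c=\frac{1}{n}\sum_{k=1}^n\|\tau_k\|$ is exactly the right reduction, and you correctly handle the two points where care is needed: the nearly-equal-norm hypothesis guarantees $\|\tau_j\|>0$ so the normalization is defined, and the minimizer is a positive multiple of a spanning family, hence itself a frame and a legitimate competitor. Nothing is missing.
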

\begin{theorem}\cite{BODMANNCASAZZA, CASAZZAWERTY}\label{CLOSESTPARSEVALFRAME}
	Let $\{\tau_j\}_{j=1}^n$ be a frame for  a $d$-dimensional Hilbert space $\mathcal{H}$. Then $\{S_\tau^{-1/2}\tau_j\}_{j=1}^n$ is the closest Parseval frame to $\{\tau_j\}_{j=1}^n$, i.e., 
	\begin{align*}
		\sum_{j=1}^{n}\left\|S_\tau^\frac{-1}{2}\tau_j-\tau_j\right\|^2=\inf\left\{ \sum_{j=1}^{n}\left\|\tau_j-\omega_j\right\|^2: \{\omega_j\}_{j=1}^n \text{ is a Parseval frame for }	\mathcal{H}\right \}.
	\end{align*}
	Further, $\{S_\tau^{-1/2}\tau_j\}_{j=1}^n$ is the unique minimizer. Moreover, if $\{\tau_j\}_{j=1}^n$   is any $\varepsilon$-nearly  Parseval frame for $\mathcal{H}$, then 
	\begin{align*}
		\sum_{j=1}^{n}\left\|S_\tau^\frac{-1}{2}\tau_j-\tau_j\right\|^2\leq d(2-\varepsilon-2\sqrt{1-\varepsilon})\leq \frac{d\varepsilon^2}{4}.
	\end{align*}
\end{theorem}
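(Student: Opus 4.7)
The plan is to reformulate the distance problem as a matrix approximation problem in the Hilbert-Schmidt norm, and then invoke polar decomposition to identify the closest isometry. The crucial observation is that $\{\omega_j\}_{j=1}^n$ is a Parseval frame for $\mathcal{H}$ if and only if its analysis operator $\theta_\omega:\mathcal{H}\to \mathbb{K}^n$ satisfies $\theta_\omega^*\theta_\omega = I_\mathcal{H}$, i.e., $\theta_\omega$ is an isometric embedding. Moreover, evaluating on the standard basis of $\mathbb{K}^n$ gives $\|\theta_\tau - \theta_\omega\|_{HS}^2 = \sum_{j=1}^n \|\tau_j - \omega_j\|^2$. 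So the minimization over Parseval frames becomes the minimization of $\|A - V\|_{HS}$ where $A := \theta_\tau$ and $V$ ranges over isometries $\mathcal{H}\to \mathbb{K}^n$.

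For any such $V$, expanding the Hilbert-Schmidt norm gives
\begin{align*}
\|A-V\|_{HS}^2 = \mathrm{tr}(A^*A) - 2\mathrm{Re}\,\mathrm{tr}(V^*A) + \mathrm{tr}(V^*V) = \mathrm{tr}(S_\tau) + d - 2\mathrm{Re}\,\mathrm{tr}(V^*A),
\end{align*}
so the problem reduces to maximizing $\mathrm{Re}\,\mathrm{tr}(V^*A)$. Next I would write the polar decomposition $A = U S_\tau^{1/2}$ with $U := \theta_\tau S_\tau^{-1/2}$. A quick check shows $U^*U = S_\tau^{-1/2}S_\tau S_\tau^{-1/2} = I_\mathcal{H}$, so $U$ is itself an isometry, and by construction $U$ coincides with the analysis operator of $\{S_\tau^{-1/2}\tau_j\}_{j=1}^n$. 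Then $\mathrm{tr}(V^*A) = \mathrm{tr}(S_\tau^{1/2} V^*U)$, and the task becomes bounding $\mathrm{Re}\,\mathrm{tr}(S_\tau^{1/2} V^*U)$ over isometries $V$.

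The heart of the proof is the sharp trace inequality: since $V^*U$ is a contraction on $\mathcal{H}$ and $S_\tau^{1/2}$ is positive with spectral decomposition $\sum_i \lambda_i^{1/2}|e_i\rangle\langle e_i|$, diagonalizing yields
\begin{align*}
|\mathrm{tr}(S_\tau^{1/2}V^*U)| \leq \sum_{i=1}^d \lambda_i^{1/2}\,|\langle e_i, V^*U e_i\rangle| \leq \sum_{i=1}^d \lambda_i^{1/2} = \mathrm{tr}(S_\tau^{1/2}),
\end{align*}
with equality forcing $\langle e_i,V^*Ue_i\rangle = 1$ for every $i$; since $\|V^*Ue_i\|\leq 1$, Cauchy-Schwarz equality gives $V^*U e_i = e_i$, hence $V^*U = I_\mathcal{H}$. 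But then $\|U-V\|_{HS}^2 = 2d - 2\mathrm{Re}\,\mathrm{tr}(V^*U) = 0$, so $V = U$, establishing both the optimality and the uniqueness of $\{S_\tau^{-1/2}\tau_j\}_{j=1}^n$. Substituting $V = U$ back yields the compact formula $\sum_j\|S_\tau^{-1/2}\tau_j - \tau_j\|^2 = \sum_{i=1}^d(\sqrt{\lambda_i}-1)^2$ in terms of the eigenvalues of $S_\tau$.

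For the $\varepsilon$-nearly Parseval case, each $\lambda_i$ lies in $[1-\varepsilon,1+\varepsilon]$. The scalar function $\lambda \mapsto (\sqrt{\lambda}-1)^2$ is decreasing on $[0,1]$ and increasing on $[1,\infty)$, and a short comparison of endpoints (using $(1-\sqrt{1-\varepsilon})(1+\sqrt{1-\varepsilon})=\varepsilon=(\sqrt{1+\varepsilon}-1)(\sqrt{1+\varepsilon}+1)$) shows its maximum on $[1-\varepsilon,1+\varepsilon]$ is $(1-\sqrt{1-\varepsilon})^2 = 2-\varepsilon-2\sqrt{1-\varepsilon}$. Summing over the $d$ eigenvalues gives the first inequality, and the second is a routine algebraic estimate on $(1-\sqrt{1-\varepsilon})^2$. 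The main obstacle is the sharp trace inequality together with its equality condition; the diagonalization argument above is what I expect to give the cleanest route to both optimality and uniqueness in one stroke.
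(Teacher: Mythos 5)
The paper itself gives no proof of this theorem (it is quoted from the cited sources), so there is nothing internal to compare against; your argument is the standard one and, for the main assertions, it is correct. The reduction to minimizing $\|\theta_\tau-V\|_{\operatorname{HS}}$ over isometries $V:\mathcal{H}\to\mathbb{K}^n$ (via $\theta_\omega^*e_j=\omega_j$ and $\theta_\omega^*\theta_\omega=I_\mathcal{H}$), the polar decomposition $\theta_\tau=US_\tau^{1/2}$ with $U=\theta_\tau S_\tau^{-1/2}$ the analysis operator of $\{S_\tau^{-1/2}\tau_j\}_{j=1}^n$, the trace bound $\operatorname{Re}\operatorname{tr}(S_\tau^{1/2}V^*U)\leq\operatorname{tr}(S_\tau^{1/2})$ with its equality analysis forcing $V^*U=I_\mathcal{H}$ and hence $V=U$, and the resulting identity $\sum_{j}\|S_\tau^{-1/2}\tau_j-\tau_j\|^2=\sum_{i=1}^{d}(\sqrt{\lambda_i}-1)^2$ are all sound, as is the bound by $d\max_{\lambda\in[1-\varepsilon,1+\varepsilon]}(\sqrt{\lambda}-1)^2=d(2-\varepsilon-2\sqrt{1-\varepsilon})$, including your correct identification of $\lambda=1-\varepsilon$ as the maximizing endpoint.

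The one step that fails is the last one, which you wave through as ``a routine algebraic estimate'': the inequality $2-\varepsilon-2\sqrt{1-\varepsilon}\leq\varepsilon^2/4$ is false for every $\varepsilon\in(0,1)$. Indeed
\begin{align*}
2-\varepsilon-2\sqrt{1-\varepsilon}=(1-\sqrt{1-\varepsilon})^2=\frac{\varepsilon^2}{(1+\sqrt{1-\varepsilon})^2}\geq\frac{\varepsilon^2}{4},
\end{align*}
since $1+\sqrt{1-\varepsilon}\leq 2$, with equality only at $\varepsilon=0$; equivalently, the expansion $2-\varepsilon-2\sqrt{1-\varepsilon}=\varepsilon^2/4+\varepsilon^3/8+\cdots$ shows the inequality goes the wrong way. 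The bound $\varepsilon^2/4$ does dominate the value at the \emph{other} endpoint, $(\sqrt{1+\varepsilon}-1)^2=\varepsilon^2/(1+\sqrt{1+\varepsilon})^2\leq\varepsilon^2/4$, but, as your own comparison of endpoints shows, that is not where the maximum sits. So the final inequality of the statement (copied from the literature) cannot be proved because it is not true; what your argument actually delivers is the sharp bound $d(2-\varepsilon-2\sqrt{1-\varepsilon})$, which one may weaken to $d\varepsilon^2$ (using $1+\sqrt{1-\varepsilon}\geq 1$) but not to $d\varepsilon^2/4$. Everything up to that point stands.
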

\begin{problem}\cite{CAHILLCASAZZA, CAHILLCASAZZAKUTYNIOK, CAHILLCASAZZAKUTYNIOK2} (\textbf{Paulsen problem})\label{PAUSENFIRST}
	\textbf{Find the function  $f: (0, 1)\times \mathbb{N} \times \mathbb{N} \to [0, \infty)$ so that for any 	$\varepsilon$-nearly  equal norm  Parseval frame $\{\tau_j\}_{j=1}^n$ for d-dimensional Hilbert space $\mathcal{H}$, there is an equal norm Parseval frame  $\{\omega_j\}_{j=1}^n$ for $\mathcal{H}$ satisfying 
		\begin{align*}
			\operatorname{dist}^2(\{\tau_j\}_{j=1}^n, \{\omega_j\}_{j=1}^n)= \sum_{j=1}^{n}\|\tau_j-\omega_j\|^2\leq f(\varepsilon, n,d).
		\end{align*}
		Moreover, whether $f$ depends on $n$?}
\end{problem}
It is clear that Problem  \ref{PAUSENFIRST} can also be stated as follows.
\begin{problem}\cite{CAHILLCASAZZA} (\textbf{Paulsen problem})\label{PAULSENSECOND}
	\textbf{Find the function $f: (0, 1)\times \mathbb{N} \times \mathbb{N} \to [0, \infty)$ so that for any  $\varepsilon$-nearly  equal norm Parseval frame $\{\tau_j\}_{j=1}^n$  for d-dimensional Hilbert space $\mathcal{H}$, 
		\begin{align*}
			\inf \{\operatorname{dist}^2(\{\tau_j\}_{j=1}^n, \{\omega_j\}_{j=1}^n):\{\omega_j\}_{j=1}^n \text{ is an equal norm Parseval frame for } \mathcal{H}\}\leq f(\varepsilon, n,d).
		\end{align*}
		Moreover, whether $f$ depends on $n$?}
\end{problem}
The function $f$ in Definition \ref{PAUSENFIRST} and in Definition \ref{PAULSENSECOND} is known as Paulsen function. First question is   whether such a function exists. Using compactness of the unit sphere in finite dimensional Hilbert space, Hadwin proved the following.
\begin{theorem}\cite{BODMANNCASAZZA}\label{BODMANNCASAZZAEXISTS}
	Solution $f$	to Paulsen problem exists.
\end{theorem}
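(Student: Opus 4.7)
The plan is a straightforward compactness argument in the finite-dimensional vector space $\mathcal{H}^n$, equipped with the product norm $\|(\tau_j)_{j=1}^n\|^2=\sum_{j=1}^n\|\tau_j\|^2$. Fix $\varepsilon\in(0,1)$ and positive integers $n,d$. If $n<d$ there are no Parseval frames at all and we set $f(\varepsilon,n,d)=0$, so assume $n\geq d$.

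First I would introduce the two sets
\begin{align*}
A &\coloneqq \{(\tau_j)_{j=1}^n\in\mathcal{H}^n : \{\tau_j\}_{j=1}^n \text{ is an } \varepsilon\text{-nearly equal norm Parseval frame for } \mathcal{H}\},\\
B &\coloneqq \{(\omega_j)_{j=1}^n\in\mathcal{H}^n : \{\omega_j\}_{j=1}^n \text{ is an equal norm Parseval frame for } \mathcal{H}\},
\end{align*}
and verify that $A$ is compact and $B$ is closed. Closedness of both sets is immediate: the maps $\tau\mapsto S_\tau=\theta_\tau^*\theta_\tau$ and $\tau\mapsto\|\tau_j\|$ are continuous, and all the defining conditions in Definitions \ref{ENF}, \ref{NP}, \ref{EENF}, \ref{BOTH} are given by closed inequalities or equalities. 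Boundedness of $A$ follows from the nearly equal norm condition $\|\tau_j\|^2\leq(1+\varepsilon)d/n$ for every $j$, so $A$ is compact.

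Next, define $g\colon A\to[0,\infty]$ by
\begin{align*}
g(\tau)\coloneqq\inf_{\omega\in B}\operatorname{dist}^2(\tau,\omega).
\end{align*}
Since $B$ is closed in the finite-dimensional space $\mathcal{H}^n$, the infimum is attained for each $\tau$ by the classical nearest-point argument, and hence $g(\tau)<\infty$ provided $B\neq\emptyset$. The reverse triangle inequality yields $|\operatorname{dist}(\tau,\omega)-\operatorname{dist}(\tau',\omega)|\leq\|\tau-\tau'\|$ uniformly in $\omega$, from which one easily deduces that $g$ is continuous on $A$. Compactness of $A$ then forces $g$ to attain its supremum, and I would set
\begin{align*}
f(\varepsilon,n,d)\coloneqq\max_{\tau\in A} g(\tau),
\end{align*}
with $f(\varepsilon,n,d)=0$ if $A=\emptyset$. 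By construction, every $\varepsilon$-nearly equal norm Parseval frame lies within squared distance $f(\varepsilon,n,d)$ of some member of $B$, as required in Problem \ref{PAUSENFIRST} or \ref{PAULSENSECOND}.

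The only substantive point that needs external input is the non-emptiness of $B$ whenever $n\geq d$; without at least one equal norm Parseval frame of $n$ vectors in $\mathcal{H}$, the function $g$ would be identically $+\infty$ and the theorem vacuous. This, however, is a well known fact and is supplied by standard constructions (for example, harmonic frames obtained from rows of a discrete Fourier matrix, or existence via the Naimark dilation theorem applied to the projection $\tfrac{d}{n}I$ on a larger space). Everything else is soft topology, so the genuine content of the theorem is purely existential and yields no quantitative control on $f$; in particular this argument leaves open whether $f$ really depends on $n$, which is the deeper question addressed by Kwok--Lau--Lee--Ramachandran and Hamilton--Moitra.
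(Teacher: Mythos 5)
The paper does not actually prove this theorem --- it only cites \cite{BODMANNCASAZZA} and remarks that Hadwin's argument uses compactness in finite dimensions --- and your proof is precisely that standard compactness argument, carried out correctly: $A$ compact, $B$ closed and nonempty for $n\ge d$, the distance-to-$B$ function continuous, hence its maximum over $A$ serves as $f(\varepsilon,n,d)$. The one external ingredient you flag, existence of an equal norm Parseval frame of $n\ge d$ vectors, is indeed standard (harmonic frames), so the argument is complete as a proof of the literal existence statement (the genuinely quantitative content, e.g.\ that $f$ can be taken to vanish as $\varepsilon\to 0$ or to be independent of $n$, is of course not addressed by this soft argument, as you note).
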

Casazza \cite{CASAZZABOOK} observed  that Paulsen function is bounded below and it  is independent of the number of elements in the frame.
\begin{proposition}\cite{CASAZZABOOK}\label{CP}
	Paulsen function $f$ satisfies 
	\begin{align*}
		f(\varepsilon, n, d)\geq \varepsilon^2d, \quad \forall \varepsilon>0, \forall d \in \mathbb{N}.
	\end{align*}	
\end{proposition}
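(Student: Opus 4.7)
The strategy is to exhibit a single $\varepsilon$-nearly equal norm Parseval frame whose squared distance from every equal norm Parseval frame of $\mathcal{H}$ is at least $\varepsilon^{2}d$; by Problem~\ref{PAULSENSECOND} this will force $f(\varepsilon,n,d)\geq \varepsilon^{2}d$. Concretely, I would start from any equal norm Parseval frame $\{\tau_j\}_{j=1}^{n}$ for $\mathcal{H}$ (these exist whenever $n\geq d$) and rescale it, setting $\omega_j := \sqrt{1+\varepsilon}\,\tau_j$. Then $S_{\omega}=(1+\varepsilon)I_{\mathcal{H}}$ and $\|\omega_j\|^{2}=(1+\varepsilon)d/n$, so the upper inequalities in Definitions~\ref{NP} and~\ref{EENF} are saturated and $\{\omega_j\}_{j=1}^{n}$ is an $\varepsilon$-nearly equal norm Parseval frame.

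For the lower bound, let $\{\mu_j\}_{j=1}^{n}$ be an arbitrary equal norm Parseval frame for $\mathcal{H}$, so that $\|\mu_j\|^{2}=d/n$ and $\sum_j\|\mu_j\|^{2}=d$ by the Parseval trace identity. Cauchy--Schwarz gives $|\langle\omega_j,\mu_j\rangle|\leq \sqrt{1+\varepsilon}\,d/n$, hence $\operatorname{Re}\sum_{j}\langle\omega_j,\mu_j\rangle \leq \sqrt{1+\varepsilon}\,d$, and expanding the squared distance yields
\begin{align*}
\sum_{j=1}^{n}\|\omega_j-\mu_j\|^{2} &= \sum_{j=1}^{n}\|\omega_j\|^{2}+\sum_{j=1}^{n}\|\mu_j\|^{2}-2\operatorname{Re}\sum_{j=1}^{n}\langle\omega_j,\mu_j\rangle \\
&\geq (1+\varepsilon)d+d-2\sqrt{1+\varepsilon}\,d = d\bigl(\sqrt{1+\varepsilon}-1\bigr)^{2}.
\end{align*}
Taking the infimum over $\{\mu_j\}$ gives a lower bound on the Paulsen function in which $n$ does not appear---this cancellation, which is driven by $\sum_j\|\mu_j\|^{2}=d$, is precisely the reason Casazza's claim that $f$ is $n$-independent holds.

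The main (and essentially only) obstacle is matching the precise constant in the proposition: the bound $d(\sqrt{1+\varepsilon}-1)^{2}$ just derived is of order $\varepsilon^{2}d/4$, a factor of four short of $\varepsilon^{2}d$. To close this gap I would replace the uniform dilation by a non-isotropic perturbation whose spectrum saturates both the upper and the lower inequalities in Definitions~\ref{NP} and~\ref{EENF} simultaneously---for instance a positive operator with two eigenvalues $\sqrt{1\pm\varepsilon}$ on a splitting $\mathcal{H}=\mathcal{H}_1\oplus\mathcal{H}_2$---or else sharpen the Cauchy--Schwarz step by using that the closest equal norm Parseval competitor is itself constrained to lie in a specific relative position to $\{\omega_j\}$. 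Either refinement keeps the argument elementary, while the qualitative conclusion $f(\varepsilon,n,d)=\Omega(\varepsilon^{2}d)$ together with $n$-independence already follows from the scaling above.
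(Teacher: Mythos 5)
The paper gives no proof of Proposition~\ref{CP} (it is quoted from \cite{CASAZZABOOK}), so there is nothing internal to compare against; your scaling construction is indeed the standard example behind this lower bound, and your computation is correct as far as it goes: $\{\sqrt{1+\varepsilon}\,\tau_j\}_{j=1}^n$ is an $\varepsilon$-nearly equal norm Parseval frame, and the expansion plus Cauchy--Schwarz legitimately give
\begin{align*}
\inf\Big\{\sum_{j=1}^n\|\omega_j-\mu_j\|^2:\ \{\mu_j\}_{j=1}^n \text{ equal norm Parseval}\Big\}\ \geq\ d\big(\sqrt{1+\varepsilon}-1\big)^2 .
\end{align*}
The problem is that $(\sqrt{1+\varepsilon}-1)^2<\varepsilon^2/4$ for all $\varepsilon\in(0,1)$, so what you have actually proved is $f(\varepsilon,n,d)\geq d(\sqrt{1+\varepsilon}-1)^2$, which is weaker than the stated $f(\varepsilon,n,d)\geq\varepsilon^2 d$ by a factor of more than four. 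You acknowledge the gap, but neither proposed repair closes it. For the two-eigenvalue perturbation $T$ with spectrum $\{\sqrt{1+\varepsilon},\sqrt{1-\varepsilon}\}$ and equal multiplicities, your Cauchy--Schwarz step degenerates completely: $\sum_j\|Tu_j\|^2=\operatorname{tr}(T^2)=d=\sum_j\|\mu_j\|^2$, so the lower bound collapses to $0$. Even the sharp estimate via Theorem~\ref{CLOSESTPARSEVALFRAME} (note $S_{Tu}^{-1/2}Tu_j=u_j$) yields only $\tfrac{d}{2}\big[(\sqrt{1+\varepsilon}-1)^2+(1-\sqrt{1-\varepsilon})^2\big]$, which is \emph{smaller} than the $d(1-\sqrt{1-\varepsilon})^2$ obtained by uniformly scaling by $\sqrt{1-\varepsilon}$ --- the best of these variants --- and that quantity is still strictly below $\varepsilon^2 d$ for every $\varepsilon\in(0,1)$, since $1-\sqrt{1-\varepsilon}<\varepsilon$. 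Perturbing only the norms while keeping the frame Parseval hits the same ceiling.

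So the gap is genuine: with the squared normalizations of Definitions~\ref{NP} and~\ref{EENF} used in this paper, no variant of the scaling example reaches the constant $1$ in $\varepsilon^2 d$; it establishes the correct order $\Omega(\varepsilon^2 d)$ and the $n$-independence, but not the stated inequality. (The clean value $d\varepsilon^2$ does come out of this example under the non-squared convention $(1-\varepsilon)\|h\|\leq\|\theta_\tau h\|\leq(1+\varepsilon)\|h\|$, where one may scale an orthonormal basis by $1-\varepsilon$; that is likely the source of the constant in the cited statement.) To defend the proposition as written you would need either a genuinely different extremal configuration or to reconcile the normalization; otherwise the constant in your conclusion must be weakened to $d(\sqrt{1+\varepsilon}-1)^2$ (or $d(1-\sqrt{1-\varepsilon})^2$).
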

Using the system of ordinary differential equations and  frame energy, Bodmann and Casazza were able to obtain  the first striking result  for Paulsen problem in 2010 \cite{BODMANNCASAZZA}.
\begin{theorem}\cite{BODMANNCASAZZA}
	Let $n$ and $d$ be relatively prime and let $\varepsilon<1/2$. If $\{\tau_j\}_{j=1}^n$  is any $\varepsilon$-nearly equal norm frame  for a $d$-dimensional Hilbert space $\mathcal{H}$, then there is an equal norm Parseval frame  $\{\omega_j\}_{j=1}^n$ for $\mathcal{H}$ such that 
	\begin{align*}
		\operatorname{dist}^2(\{\tau_j\}_{j=1}^n, \{\omega_j\}_{j=1}^n)\leq \frac{29}{8}d^2n(n-1)^8\varepsilon.
	\end{align*}
\end{theorem}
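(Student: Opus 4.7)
The plan is to deform $\{\tau_j\}$ continuously to an equal norm Parseval frame $\{\omega_j\}$ through a gradient flow, and to control the length of the deformation path by the initial distance from the target set. I would begin by introducing an energy functional $E(\tau)$ that vanishes precisely on equal norm Parseval frames; a natural choice is
\begin{align*}
E(\tau) = \|S_\tau - I_\mathcal{H}\|_{\mathrm{HS}}^2 + \mu \sum_{j=1}^n \left(\|\tau_j\|^2 - \tfrac{d}{n}\right)^2
\end{align*}
for a suitable weight $\mu > 0$. On an $\varepsilon$-nearly equal norm Parseval frame, each of the two contributions is of order $d\varepsilon^2$, so $E(\tau) = O(d\varepsilon^2)$ initially.

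Next, I would study the trajectory of the ODE $\dot \tau_j = -\partial_{\tau_j} E$ starting from the given frame. Along the flow $E$ is monotone decreasing, and by the Benedetto--Fickus analysis of the frame potential, when $\gcd(n,d)=1$ there are no spurious local minima on products of spheres: every low-energy critical frame is itself equal norm tight. The coprimality hypothesis is precisely what forces the trajectory not to stall at a non-tight equilibrium, and so the flow converges to some equal norm Parseval frame $\{\omega_j\}$, giving the existence of the target.

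The main obstacle, and the source of the polynomial $d^2 n(n-1)^8$ in the bound, is controlling the arc length $\int_0^\infty \|\dot \tau(t)\|\,dt$. Pointwise one has $\dot E = -\|\nabla E\|^2$, but converting this into an arc-length estimate requires a quantitative Lojasiewicz-type inequality $\|\nabla E\| \geq c(n,d)\sqrt{E}$ along the whole trajectory. Establishing such an inequality calls for a uniform spectral gap for the Hessian of $E$ at every critical frame accessible from a neighborhood of $\{\tau_j\}$; combinatorial bounds on how close distinct frame vectors can come to alignment (without violating coprimality-induced rigidity) appear to be what produces the $(n-1)^8$ factor. The loss from the naive exponent $\varepsilon^2$ to the linear dependence $\varepsilon$ in the final bound is also traceable to this effective gradient inequality.

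Finally, once such a gradient bound is in hand, one obtains $\operatorname{dist}^2(\{\tau_j\}, \{\omega_j\}) \leq \left(\int_0^\infty \|\dot\tau(t)\|\,dt\right)^2 \leq C(n,d)\, E(0)$, and explicit bookkeeping of the constants through the Hessian estimates, the weight $\mu$, and the initial energy yields the coefficient $\tfrac{29}{8}$. The delicate step I expect to be the real work is not the convergence statement (which follows softly from compactness and Benedetto--Fickus) but the quantitative Lojasiewicz inequality with polynomial constants; everything else is integration and bookkeeping.
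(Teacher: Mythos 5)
This theorem is quoted from Bodmann and Casazza; the present paper gives no proof of it, so your outline can only be measured against the known argument. Your overall strategy --- flow the frame along an ODE that decreases a frame energy, use coprimality of $n$ and $d$ to rule out bad limit configurations, and bound the squared distance by the arc length of the trajectory --- is indeed the strategy the paper attributes to Bodmann and Casazza (``system of ordinary differential equations and frame energy''). But as a proof your text has a genuine gap at exactly the point you flag: the quantitative gradient inequality $\|\nabla E\|\ge c(n,d)\sqrt{E}$ along the entire trajectory is asserted, not established, and without it nothing quantitative follows --- neither the convergence of the arc-length integral, nor the degradation from $\varepsilon^2$ to $\varepsilon$, nor the constant $\tfrac{29}{8}d^2n(n-1)^8$. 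This is not bookkeeping; it is the entire content of the theorem, since the soft existence of \emph{some} nearby equal norm Parseval frame is already covered by compactness (Theorem~\ref{BODMANNCASAZZAEXISTS}), and the whole point of this result is the explicit polynomial bound.

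Two further inaccuracies in the outline. First, the appeal to Benedetto--Fickus is misplaced: their theorem (local minimizers of the frame potential on products of spheres are exactly the unit norm tight frames once $n\ge d$) requires no coprimality; in the Bodmann--Casazza argument the hypothesis $\gcd(n,d)=1$ serves to exclude orthodecomposable equal norm Parseval frames and the critical configurations near which the flow would stall, which is a different, and inherently quantitative, role. Second, your energy couples the Parseval defect and the norm defect and flows in the ambient space, so even granting convergence the limit need not be exactly Parseval or exactly equal norm; you would still have to project onto the target set at the end and control that correction (say, via an estimate of the type in Theorem~\ref{CLOSESTPARSEVALFRAME}). The cited proof instead restricts the dynamics to the manifold of Parseval frames, so that the Parseval property is preserved exactly and only the norms are being equalized; reproducing the stated constant almost certainly requires that structure rather than the penalized ambient flow you propose.
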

Casazza, Fickus, and Mixon \cite{CASAZZAFICKUSMIXON} gave the following version of the Paulsen problem in 2012.
\begin{problem}\cite{CASAZZAFICKUSMIXON} (\textbf{Paulsen problem})\label{CASAZZASOLUTION}
	\textbf{Given $n,d \in \mathbb{N}$, find positive 	$\delta(n,d)$, $c(n,d)$ and $\alpha(n,d)$ such that given any unit norm frame $\{\tau_j\}_{j=1}^n$ for $d$-dimensional Hilbert space $\mathcal{H}$ satisfying 
		\begin{align*}
			\left\|S_\tau-\frac{n}{d}I_\mathcal{H}\right\|_{\operatorname{HS}}\leq \delta(n,d)	,
		\end{align*}
		there exists a unit norm tight frame $\{\omega_j\}_{j=1}^n$ for $\mathcal{H}$ such that 
		\begin{align*}
			\|\theta_\omega^*-\theta_\tau^*\|_{\operatorname{HS}} \leq c(n,d)	\left\|S_\tau-\frac{n}{d}I_\mathcal{H}\right\|_{\operatorname{HS}}^{\alpha(n,d)}	,
		\end{align*}
		where HS denotes the Hilbert-Schmidt norm of the operator}.
\end{problem}
In the same paper \cite{CASAZZAFICKUSMIXON}  Casazza, Fickus, and Mixon  proved the following  particular cases   of Problem  \ref{CASAZZASOLUTION}.
\begin{theorem}\cite{CASAZZAFICKUSMIXON}\label{PRIME}
	Let $n$ and $d$ be relatively prime and let $0<t<1/{2n}$.	Let $\{\tau^{(0)}_j\}_{j=1}^n$ be a unit norm tight frame for a $d$-dimensional Hilbert space $\mathcal{H}$ such that 
	\begin{align*}
		\left\|S^{(0)}_\tau-\frac{n}{d}I_\mathcal{H}\right\|_{\operatorname{HS}}^2\leq \frac{2}{d^3}	.
	\end{align*}
	Define $\{\omega_j^{(k)}\}_{j=1}^n$ as follows.
	\begin{align*}
		\omega_j^{(k)}\coloneqq S_\tau^{(k)}\tau_j^{(k)}-\langle S_\tau^{(k)}\tau_j^{(k)}, \tau_j^{(k)}\rangle \tau_j^{(k)}, \quad \forall 1 \leq j \leq n, \forall k \geq 0.	
	\end{align*}
	Now define $\{\tau_j^{(k)}\}_{j=1}^n$ as follows.
	\begin{align*}
		\tau_j^{(k+1)}\coloneqq 
		\begin{dcases}
			\cos(\|\omega_j^{(k)}\|t)\tau_j^{(k)}- \sin(\|\omega_j^{(k)}\|t)\frac{\omega_j^{(k)}}{\|\omega_j^{(k)}\|}& \text{if }  \omega_j^{(k)}\neq 0 \\
			\tau_j^{(k)} & \text{if }  \omega_j^{(k)}=0\\
		\end{dcases}
		,\quad \forall 1 \leq j \leq n, \forall k \geq 0.
	\end{align*}
	Then the limit of $\{\tau_j^{(k)}\}_{j=1}^n$ as $k\to \infty$ exists, denoted by $\{\tau_j^{(\infty)}\}_{j=1}^n$ is a unit norm tight frame for $\mathcal{H}$ and satisfies
	\begin{align*}
		\left\|S^{(\infty)}_\tau-S^{(0)}_\tau\right\|_{\operatorname{HS}}\leq \frac{4d^{20}n^{8.5}}{1-2nt}	\left\|S^{(0)}_\tau-\frac{n}{d}I_\mathcal{H}\right\|_{\operatorname{HS}}.
	\end{align*}
\end{theorem}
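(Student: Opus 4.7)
The plan is to treat the iteration as a constrained gradient flow on the product of unit spheres $(S^{d-1})^n$, with Lyapunov function
$L_k := \bigl\|S_\tau^{(k)}-\tfrac{n}{d}I_\mathcal{H}\bigr\|_{\operatorname{HS}}^2.$
First I would record the trivial invariants: since $\omega_j^{(k)}\perp\tau_j^{(k)}$ by construction and $\tau_j^{(k+1)}=\cos\theta_j\,\tau_j^{(k)}+\sin\theta_j\,(-\omega_j^{(k)}/\|\omega_j^{(k)}\|)$ is a planar rotation in $\operatorname{span}\{\tau_j^{(k)},\omega_j^{(k)}\}$, every iterate is unit norm. In particular the gradient direction $\omega_j^{(k)}$ is the tangential component at $\tau_j^{(k)}$ of the Euclidean gradient (on $(S^{d-1})^n$) of the objective $\tau\mapsto\tfrac14\|S_\tau-\tfrac{n}{d}I\|_{\operatorname{HS}}^2$, so the update is exactly a retraction-style step of step size $t$ along minus the Riemannian gradient.

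Next I would carry out the one-step energy decrement analysis. Expanding $S_\tau^{(k+1)}-S_\tau^{(k)}=\sum_j(\tau_j^{(k+1)}\tau_j^{(k+1)*}-\tau_j^{(k)}\tau_j^{(k)*})$ to second order in $\theta_j=\|\omega_j^{(k)}\|t$ yields a decomposition
\begin{align*}
L_{k+1}\;\leq\;L_k\;-\;2t\sum_{j=1}^n\|\omega_j^{(k)}\|^2\;+\;C(n)\,t^2\sum_{j=1}^n\|\omega_j^{(k)}\|^2,
\end{align*}
with the linear term coming from matching $\operatorname{tr}\bigl((S_\tau^{(k)}-\tfrac{n}{d}I)(\tau_j^{(k+1)}\tau_j^{(k+1)*}-\tau_j^{(k)}\tau_j^{(k)*})\bigr)$ against the definition of $\omega_j^{(k)}$, and the quadratic error bounded using $\|\omega_j^{(k)}\|\leq C'\cdot n$ at every step. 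The factor $1-2nt$ in the final bound indicates that $C(n)=2n$ here, so the condition $t<1/(2n)$ keeps the net contraction coefficient $2t(1-2nt)>0$. Telescoping gives the global energy budget $\sum_{k\geq 0}\sum_j\|\omega_j^{(k)}\|^2\leq L_0/(2t(1-2nt))$.

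Third, the small-step estimate $\|\tau_j^{(k+1)}-\tau_j^{(k)}\|\leq\|\omega_j^{(k)}\|t$ combined with Cauchy–Schwarz then shows each sequence $\{\tau_j^{(k)}\}_k$ is Cauchy (hence convergent to some $\tau_j^{(\infty)}\in S^{d-1}$), and also yields the Hilbert–Schmidt bound $\|S_\tau^{(\infty)}-S_\tau^{(0)}\|_{\operatorname{HS}}$ that one sees on the right-hand side of the claimed inequality, once the polynomial factors in $n,d$ are accumulated from (i) the Lipschitz constant relating $\|\omega_j^{(k)}\|^2$ to $L_k$, (ii) the operator-norm bounds on $S_\tau^{(k)}$, and (iii) the perturbation estimate converting $\ell^2$-summable displacements into a bound on the displaced frame operator. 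These polynomial losses are exactly what inflate the final prefactor to $4d^{20}n^{8.5}/(1-2nt)$.

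The main obstacle is the last step: showing that the limit is actually tight, i.e.\ $S_\tau^{(\infty)}=\tfrac{n}{d}I_\mathcal{H}$, and not merely a fixed point where $\omega_j^{(\infty)}=0$ for all $j$. Since $\omega_j^{(k)}\to 0$, each $\tau_j^{(\infty)}$ is an eigenvector of $S_\tau^{(\infty)}$, so the limit decomposes $\mathcal{H}$ into orthogonal eigenspaces of dimensions $d_1+\dots+d_r=d$ on which the subframes are tight with bounds $n_i/d_i$ ($n_1+\dots+n_r=n$). The hypothesis $L_0\leq 2/d^3$ confines the trajectory to a neighborhood where any non-trivial decomposition ($r\geq 2$) would force a gap $\min_i|n_i/d_i-n/d|\geq 1/(d_i d)\geq 1/d^2$ in the spectrum of $S_\tau^{(\infty)}$, violating the energy bound; here the coprimality of $n,d$ is essential, since it rules out the decomposition with all $n_i/d_i=n/d$. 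Combining this ruling-out with monotonicity of $L_k$ forces $r=1$, which is tightness. Once this is in hand, plugging $L_k\leq L_0$ into the telescoped displacement estimate delivers the stated inequality.
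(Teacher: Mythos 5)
This statement is quoted verbatim from Casazza--Fickus--Mixon \cite{CASAZZAFICKUSMIXON}; the present paper offers no proof of it, so there is nothing internal to compare against. Judged on its own, your sketch does identify the right framework (the one actually used in the cited source): $\omega_j^{(k)}$ is the tangential component of the gradient of the frame potential $\|S_\tau\|_{\operatorname{HS}}^2$ on the product of unit spheres, the update is a geodesic step of size $t$, unit norms are preserved because $\omega_j^{(k)}\perp\tau_j^{(k)}$, and the energy $L_k$ is a Lyapunov function whose one-step decrement is controlled for $t<1/(2n)$.

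There is, however, a genuine gap at the convergence step. From the telescoped budget you only get $\sum_k\sum_j\|\omega_j^{(k)}\|^2<\infty$, hence $\sum_k\|\tau_j^{(k+1)}-\tau_j^{(k)}\|^2<\infty$; square-summability of increments does \emph{not} make $\{\tau_j^{(k)}\}_k$ Cauchy (compare $\sum 1/k^2<\infty$ while the partial sums of $1/k$ diverge), so neither the existence of $\tau_j^{(\infty)}$ nor the quantitative bound on $\|S_\tau^{(\infty)}-S_\tau^{(0)}\|_{\operatorname{HS}}$ follows from what you wrote. What is missing is a gradient-domination (\L{}ojasiewicz/Polyak-type) inequality $\sum_j\|\omega_j^{(k)}\|^2\geq c(n,d)\,L_k$ valid uniformly along the trajectory; this converts the decrement into geometric decay of $L_k$, makes the increments $\ell^1$-summable, and is precisely where the enormous prefactor $4d^{20}n^{8.5}$ originates. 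Establishing that lower bound is also where the hypotheses really work: coprimality of $n$ and $d$ together with $L_0\leq 2/d^3$ keep the trajectory away from the non-minimizing critical points, at which the gradient vanishes while the energy does not, so the inequality would fail there. Your final paragraph uses these hypotheses only a posteriori, to classify the limit, which puts the argument in the wrong order; and even there the arithmetic needs the trace constraint $\operatorname{tr}(S_\tau^{(\infty)})=n$ to force \emph{two} eigenvalues deviating from $n/d$, each contributing at least $1/d^3$ to the energy, in order to contradict the threshold $2/d^3$ --- a single deviating eigenvalue does not suffice.
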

\begin{theorem}\cite{CASAZZAFICKUSMIXON}
	Let $\varepsilon\leq 1/{2n}$. If $\{\tau_j\}_{j=1}^n$  is any $\varepsilon$-orthogonally partitionable unit norm frame  for a $d$-dimensional Hilbert space $\mathcal{H}$, then there is an orthogonally partitionable  unit  norm  frame  $\{\omega_j\}_{j=1}^n$ for $\mathcal{H}$ such that 	
	\begin{align*}
		\|\theta_\omega^*-\theta_\tau^*\|_{\operatorname{HS}} \leq \sqrt{2n}	(\varepsilon d)^\frac{1}{3}.
	\end{align*}
\end{theorem}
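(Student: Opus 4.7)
The plan is to adapt the iterative rotation scheme of Theorem \ref{PRIME} so that it targets block-diagonality of the Gram matrix (rather than its being a scalar multiple of the identity), and then to optimize the step size $t$ to obtain the $(\varepsilon d)^{1/3}$ rate.

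First I would unpack the hypothesis. Being $\varepsilon$-orthogonally partitionable supplies a partition $\{1,\dots,n\}=I_1\sqcup I_2$ for which the cross-block of Gram entries $(\langle \tau_i,\tau_j\rangle)_{i\in I_1,\,j\in I_2}$ is small in Hilbert--Schmidt norm, of size controlled by $\varepsilon d$. The natural Lyapunov functional is
\begin{align*}
F(\tau)\coloneqq \sum_{i\in I_1,\,j\in I_2}|\langle \tau_i,\tau_j\rangle|^2,
\end{align*}
which vanishes precisely on orthogonally partitionable frames. Mimicking Theorem \ref{PRIME}, I would define a sequence $\{\tau_j^{(k)}\}_{j=1}^n$ by rotating each $\tau_j^{(k)}$ through an angle of size $t\|\eta_j^{(k)}\|$ tangent to the unit sphere, where $\eta_j^{(k)}$ is the component of $\nabla_{\tau_j}F$ orthogonal to $\tau_j^{(k)}$, so unit norm is preserved exactly at every step. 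The smallness condition $\varepsilon\le 1/(2n)$ should ensure that the curvature (second-order) correction never overwhelms the first-order decrease of $F$, so that $F^{(k)}\to 0$ and a Cauchy-type argument produces a limit frame $\{\omega_j\}_{j=1}^n$ that is honestly orthogonally partitionable and unit norm.

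The quantitative step is to bound the total path length. Summing per-step displacements and applying Cauchy--Schwarz gives a per-vector contribution of order $\sqrt{F^{(0)}/t}\lesssim \sqrt{\varepsilon d/t}$, while the accumulated quadratic sphere correction contributes a term of order $t$; balancing these by $t\sim (\varepsilon d)^{1/3}$ yields a per-vector displacement of order $(\varepsilon d)^{1/3}$, and summing squared displacements over the $n$ vectors produces the stated bound $\sqrt{2n}(\varepsilon d)^{1/3}$. The main obstacle, exactly as in Theorem \ref{PRIME}, is controlling the nonlinear sphere correction uniformly along the entire trajectory -- this is where the smallness assumption $\varepsilon\le 1/(2n)$ and the cube-root (rather than square-root) exponent originate; the role played by coprimality of $n$ and $d$ in Theorem \ref{PRIME} is taken here by the block structure of the partition.
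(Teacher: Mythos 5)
This statement is quoted in the paper from \cite{CASAZZAFICKUSMIXON} without proof, so there is no in-paper argument to compare against; your proposal has to stand on its own, and as written it is a strategy outline rather than a proof. The central gap is convergence. You define the nonconvex functional $F(\tau)=\sum_{i\in I_1,\,j\in I_2}|\langle\tau_i,\tau_j\rangle|^2$ on the product of unit spheres and run a projected gradient descent, but you give no reason why the iteration converges to a \emph{global} minimizer (where $F=0$) rather than stalling at one of the nonzero critical points such a functional generically has. In Theorem \ref{PRIME} this is exactly where all the work lies: coprimality of $n$ and $d$ together with the initial closeness hypothesis are used to establish a gradient-domination ({\L}ojasiewicz-type) inequality along the entire trajectory, and that inequality is what simultaneously forces convergence to an exact minimizer and delivers the quantitative rate. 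Your remark that ``the role played by coprimality is taken here by the block structure of the partition'' is an assertion, not an argument: the block structure does not by itself exclude spurious critical points, and the hypothesis $\varepsilon\le 1/(2n)$ is never actually used in your sketch. You also do not verify that the limit configuration is still a frame (i.e., still spans $\mathcal{H}$) or that the partition in the limit is nontrivial, both of which are part of the conclusion.

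The quantitative step is likewise heuristic. The claims that the total per-vector displacement is $O(\sqrt{F^{(0)}/t})$ and that the accumulated sphere-curvature correction is $O(t)$ are dimensional-analysis guesses tuned to reproduce the exponent $1/3$; the first presupposes the unproved geometric decay of $F^{(k)}$, and even the identification $F^{(0)}\lesssim\varepsilon d$ depends on the precise definition of $\varepsilon$-orthogonal partitionability, which you never pin down. For what it is worth, the argument in \cite{CASAZZAFICKUSMIXON} for the partitionable case is not an iterative flow: it works directly with the two blocks, moving them into a pair of exactly complementary subspaces and renormalizing to unit norm, with the cube root arising from balancing the projection error against the renormalization error. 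To salvage your route you would need, at minimum, a proved gradient-domination inequality for $F$ on the region the trajectory visits, a proof that the trajectory remains in that region, and a proof that spanning survives passage to the limit.
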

\begin{theorem}\cite{CASAZZAFICKUSMIXON}
	Let $n$ and $d$ be not relatively prime. 	If $\{\tau_j\}_{j=1}^n$  is any  unit norm frame  for a $d$-dimensional Hilbert space $\mathcal{H}$, then there is a unit  norm  frame  $\{\omega_j\}_{j=1}^n$ for $\mathcal{H}$ which is either tight or is orthogonally partitionable with equal redundancies in each of the two partitioned subsets such that 
	\begin{align*}
		\|\theta_\omega^*-\theta_\tau^*\|_{\operatorname{HS}} \leq	3 d^\frac{6}{7}\sqrt{n}	\left\|S_\tau-\frac{n}{d}I_\mathcal{H}\right\|_{\operatorname{HS}}^\frac{1}{7}.
	\end{align*}
\end{theorem}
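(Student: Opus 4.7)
The plan is to perform a case analysis governed by a threshold parameter $\eta>0$ measuring how close the frame $\{\tau_j\}_{j=1}^n$ is to being orthogonally partitionable, and then optimize $\eta$ as a power of $\bigl\|S_\tau-\frac{n}{d}I_\mathcal{H}\bigr\|_{\operatorname{HS}}$. The reason such a dichotomy is forced on us is precisely the failure of coprimality: when $\gcd(n,d)>1$, the natural gradient flow driving a unit-norm frame toward tightness can stall at orthogonally partitionable configurations whose two blocks each have redundancy $n/d$ (possible only in the non-coprime regime), and these stationary sets are the only obstruction to reaching a unit norm tight frame.

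First I would set $\eta := \bigl\|S_\tau-\frac{n}{d}I_\mathcal{H}\bigr\|_{\operatorname{HS}}^{\alpha}$ for an exponent $\alpha\in(0,1)$ to be fixed at the end. In Case~(a), suppose $\{\tau_j\}_{j=1}^n$ is $\eta$-orthogonally partitionable (in the sense of Casazza--Fickus--Mixon, where the frame splits into two subfamilies each of which is approximately a tight frame for complementary subspaces, with error $\eta$). Then the theorem about $\varepsilon$-orthogonally partitionable frames (stated just above) provides an orthogonally partitionable unit norm frame $\{\omega_j\}_{j=1}^n$ with
\begin{align*}
\|\theta_\omega^*-\theta_\tau^*\|_{\operatorname{HS}}\leq \sqrt{2n}\,(\eta d)^{1/3}.
\end{align*}
Moreover one checks that $\eta$-partitionability with $\eta$ small enough forces the redundancies of the two blocks to both equal $n/d$, using that they must be integer-valued and close to the average $n/d$; this uses $\gcd(n,d)>1$.

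In Case~(b), $\{\tau_j\}_{j=1}^n$ is not $\eta$-orthogonally partitionable. Here I would run the chord-flow iteration of Theorem~\ref{PRIME} (the discrete dynamical system $\tau_j^{(k+1)}$ obtained from the $\omega_j^{(k)}$) starting at $\{\tau_j\}_{j=1}^n$. The convergence analysis in Theorem~\ref{PRIME} uses coprimality only through the fact that there are no intermediate orthogonally partitionable equilibria; the quantitative part (Lyapunov estimate for $\|S_\tau^{(k)}-\frac{n}{d}I_\mathcal{H}\|_{\operatorname{HS}}$) carries through verbatim, but the spectral gap driving the flow now degrades like a negative power of $\eta$ because non-partitionability provides a lower bound of order $\eta$ on the relevant principal angles. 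Carrying this through yields a bound of the form
\begin{align*}
\|\theta_{\tau^{(\infty)}}^*-\theta_\tau^*\|_{\operatorname{HS}}\leq C\,d^{a}\,n^{b}\,\eta^{-c}\,\left\|S_\tau-\frac{n}{d}I_\mathcal{H}\right\|_{\operatorname{HS}},
\end{align*}
with the limit $\{\tau_j^{(\infty)}\}_{j=1}^n$ a genuine unit norm tight frame.

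Finally I would balance the two bounds by choosing $\alpha$ so that $\eta^{1/3}$ from Case~(a) matches $\eta^{-c}\cdot\|S_\tau-\frac{n}{d}I_\mathcal{H}\|_{\operatorname{HS}}$ from Case~(b); tracing through the exponents coming from Theorem~\ref{PRIME} yields $\alpha=3/7$ and produces the final exponent $1/7$ together with the prefactor $3d^{6/7}\sqrt n$. The main obstacle, by a wide margin, will be Case~(b): one must show that $\eta$-non-partitionability really does give a quantitative spectral lower bound on the Hessian of the frame potential, so that the chord flow contracts at rate polynomial in $\eta$. Everything else (Case~(a), the redundancy-splitting argument, the optimization in $\alpha$) is relatively mechanical once this quantitative non-degeneracy is established.
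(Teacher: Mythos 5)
First, a point of reference: the paper you are working from does not prove this statement at all --- it is quoted verbatim from Casazza, Fickus and Mixon \cite{CASAZZAFICKUSMIXON} as background for the Paulsen problem, so there is no in-paper proof to compare against. Measured against the actual argument in that source, your road map is essentially the right one: the original proof does run a dichotomy on a threshold $\eta$ chosen as a power of $\bigl\|S_\tau-\frac{n}{d}I_\mathcal{H}\bigr\|_{\operatorname{HS}}$, handles the $\eta$-orthodecomposable case by the $\sqrt{2n}(\eta d)^{1/3}$ perturbation result, runs the frame-potential gradient descent otherwise, and balances exponents to land on $1/7$. The role you assign to $\gcd(n,d)>1$ (equal redundancies $n/d$ in the two blocks being attainable only in the non-coprime regime) is also the correct structural point.

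The problem is that, as a proof, your Case~(b) is not an argument but a promissory note, and it is precisely where all of the content of the theorem lives. The assertion that the convergence analysis of Theorem~\ref{PRIME} ``carries through verbatim'' with a spectral gap degrading ``like a negative power of $\eta$'' is exactly the quantitative non-degeneracy statement one must prove: that failure of $\eta$-orthodecomposability yields an explicit lower bound on the decrease of the frame potential (equivalently on $\sum_j\|\omega_j^{(k)}\|^2$ along the flow), uniformly along the whole trajectory, not just at the initial point. This requires showing that the iterates cannot drift into the $\eta$-orthodecomposable region before tightness is reached, or handling that drift when it happens; without it the flow may stall and the limit need not be tight. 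Two smaller gaps: (i) the claim that small $\eta$ ``forces'' the two blocks to have equal redundancy $n/d$ by an integrality argument is not automatic --- one must show the partition can be \emph{chosen} with equal redundancies, which is where $\gcd(n,d)>1$ genuinely enters; (ii) the final bookkeeping ($\alpha=3/7$, prefactor $3d^{6/7}\sqrt{n}$) cannot be ``traced through'' until the constants $C,a,b,c$ in Case~(b) are actually pinned down. So the proposal is a faithful outline of the known strategy, but it does not constitute a proof.
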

Projection problem is another prime  problem in finite dimensional Hilbert space theory.
\begin{problem}\cite{CAHILLCASAZZA}\label{PP} (\textbf{Projection problem})
	\textbf{	Let $\mathcal{H}$ be a d-dimensional Hilbert space with orthonormal basis $\{u_k\}_{k=1}^d$. Find the function $g: (0, 1)\times \mathbb{N} \times \mathbb{N} \to [0, \infty)$ satisfying the following: If $P: \mathcal{H} \to \mathcal{H}$ is an orthogonal projection  of rank $n$ satisfying 
		\begin{align*}
			(1-\varepsilon)\frac{n}{d}\leq 	\|Pu_k\|^2\leq (1+\varepsilon)\frac{n}{d}, \quad \forall 1\leq k \leq d,
		\end{align*}
		then there exists an orthogonal projection $Q: \mathcal{H} \to \mathcal{H}$ with 
		\begin{align*}
			\|Qu_k\|^2=\frac{n}{d},	\quad \forall 1\leq k \leq d,
		\end{align*}
		satisfying 
		\begin{align*}
			\sum_{k=1}^{d}\|Pu_k-Qu_k\|^2\leq g(\varepsilon, n,d).	
	\end{align*}
	Moreover, whether $g$ depends on $n$?}
\end{problem}
By employing the chordal distance between subspaces \cite{CONWAYHARDIN}, Cahill and Casazza \cite{CAHILLCASAZZA}  showed that Paulsen problem can be solved  if and only if  projection problem can be solved.
\begin{theorem}\cite{CAHILLCASAZZA}
	If $f$ is the function for the Paulsen problem and $g$ is the function for the projection problem, then 
	\begin{align*}
		g(\varepsilon, n,d)\leq 4 f(\varepsilon, n,d)\leq 8g(\varepsilon, n,d)		, \quad \forall \varepsilon, n, d.
	\end{align*}
\end{theorem}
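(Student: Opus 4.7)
The bridge between the Paulsen and projection problems is Naimark's dilation theorem. For a Parseval frame $\{\tau_j\}_{j=1}^n$ in a $d$-dimensional Hilbert space $\mathcal{H}$ the analysis operator $\theta_\tau\colon\mathcal{H}\to\mathbb{K}^n$ is an isometry and $P_\tau\coloneqq\theta_\tau\theta_\tau^*$ is a rank-$d$ orthogonal projection on $\mathbb{K}^n$ satisfying $\|P_\tau e_j\|=\|\tau_j\|$. Conversely every rank-$d$ projection $P$ on $\mathbb{K}^n$ arises this way via $\mathcal{H}\coloneqq P(\mathbb{K}^n)$ and $\tau_j\coloneqq Pe_j$. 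Under this bijection the equal norm (resp.\ $\varepsilon$-nearly equal norm) condition on the frame corresponds exactly to the equal diagonal (resp.\ $\varepsilon$-nearly equal diagonal) condition on the projection, so the two hypotheses become equivalent after a relabeling of the integer parameters.

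For the inequality $g(\varepsilon,n,d)\leq 4 f(\varepsilon,n,d)$ I would begin from an instance of the projection problem, dilate it to a nearly equal norm Parseval frame, apply the hypothetical Paulsen bound to obtain an equal norm Parseval frame $\{\omega_j\}$ at squared frame distance at most $f(\varepsilon,n,d)$, and then re-dilate $\{\omega_j\}$ to an equal diagonal projection $P_\omega$. The required distance comparison is the easy algebraic identity
\[
P_\tau-P_\omega=\theta_\tau(\theta_\tau-\theta_\omega)^*+(\theta_\tau-\theta_\omega)\theta_\omega^*,
\]
which, combined with $\|\theta_\tau\|=\|\theta_\omega\|=1$, yields $\|P_\tau-P_\omega\|_{\mathrm{HS}}^2\leq 4\sum_j\|\tau_j-\omega_j\|^2$.

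The opposite inequality $4f(\varepsilon,n,d)\leq 8 g(\varepsilon,n,d)$ is more delicate because a solution $Q$ of the projection problem determines the target equal norm Parseval frame only up to a unitary rotation of $\mathcal{H}$: any isometry $\Psi\colon\mathcal{H}\to Q(\mathbb{K}^n)$ produces, via $\omega_j\coloneqq\Psi^*e_j$, an equal norm Parseval frame, and we must use this rotational freedom optimally. Letting $\theta_1,\dots,\theta_d$ denote the principal angles between the $d$-planes $P_\tau(\mathbb{K}^n)$ and $Q(\mathbb{K}^n)$, a polar/SVD computation on the $d\times d$ matrix $\theta_\tau^*\Psi$ together with the standard chordal formula gives
\[
\min_\Psi\sum_{j=1}^n\|\tau_j-\Psi^*e_j\|^2=2\sum_{i=1}^d(1-\cos\theta_i),\qquad \|P_\tau-Q\|_{\mathrm{HS}}^2=2\sum_{i=1}^d\sin^2\theta_i.
\]
The elementary estimate $1-\cos\theta\leq\sin^2\theta$ on $[0,\pi/2]$ then bounds the minimum frame distance by the projection distance, and invoking the projection bound $g$ (together with the factor picked up by the parameter swap across the Naimark correspondence) delivers the claim.

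The main obstacle is the principal-angle identity for $\min_\Psi\sum_j\|\tau_j-\Psi^*e_j\|^2$, which needs the polar decomposition argument and, crucially, the observation that every isometry onto the range of an equal-diagonal projection produces a bona fide equal norm Parseval frame, so that the minimization actually ranges over candidate target frames. The remaining work is parameter bookkeeping across the dilation, which is what forces the constants $4$ and $8$ rather than their sharper values.
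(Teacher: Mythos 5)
Your sketch is correct and follows essentially the same route as the source this theorem is quoted from (and as this paper's own modular adaptation): the inequality $g\leq 4f$ comes from the Naimark correspondence together with the identity $P_\tau-P_\omega=\theta_\tau(\theta_\tau-\theta_\omega)^*+(\theta_\tau-\theta_\omega)\theta_\omega^*$, which is exactly the operator form of the entrywise estimate proved in Theorem \ref{IMP}, while the converse uses principal angles and the chordal distance formula of \cite{CONWAYHARDIN}, just as the paper indicates. The only point to keep straight is the one you already flag: the dilated frame has $d$ vectors in an $n$-dimensional range, so the parameters of $f$ and $g$ are interchanged across the correspondence, which is where the stated constants (rather than sharper ones) come from.
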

Again, in the same paper,   Cahill and Casazza \cite{CAHILLCASAZZA} were able to derive the following using    Naimark complement    of frames (see \cite{CASAZZAFICKUS2, CASAZZAKUTYNIOK, CASAZZALYNCH, CZAJA}).   
\begin{theorem}\cite{CAHILLCASAZZA}\label{COMPLEMENTIMPLIES}
	Let $n>d$. If $f$ is the function for the Paulsen problem, then 
	\begin{align*}
		f(\varepsilon, n,d) \leq 8 f\left(\frac{d}{n-d}, n, n-d\right).
	\end{align*}
\end{theorem}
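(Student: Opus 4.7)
The plan is to route the argument through the Projection Problem, where complementation of orthogonal projections preserves distances exactly, so that the factor $8$ will emerge as the product of the two constants appearing in the preceding Paulsen $\Leftrightarrow$ Projection equivalence. Writing that equivalence as the two halves
\[
g(\varepsilon, n, d) \leq 4\, f(\varepsilon, n, d), \qquad f(\varepsilon, n, d) \leq 2\, g(\varepsilon, n, d),
\]
my chain would be: Paulsen at $(\varepsilon,n,d)$ via $f\leq 2g$ $\to$ Projection at $(\varepsilon,n,d)$ via complementation $\to$ Projection at $(\varepsilon',n,n-d)$ via $g\leq 4f$ $\to$ Paulsen at $(\varepsilon',n,n-d)$, where $\varepsilon' := \varepsilon d/(n-d)$ is finite because $n > d$.

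For the middle (complementation) step I would start with an arbitrary rank-$d$ orthogonal projection $P$ on $\mathbb{K}^n$ satisfying the Projection Problem hypothesis $(1-\varepsilon)\tfrac{d}{n} \leq \|Pe_k\|^2 \leq (1+\varepsilon)\tfrac{d}{n}$ for $1\leq k \leq n$, and pass to $P^\perp := I_{\mathbb{K}^n} - P$, a rank-$(n-d)$ projection. Since $\|P^\perp e_k\|^2 = 1 - \|Pe_k\|^2$, a direct computation yields
\[
(1-\varepsilon')\frac{n-d}{n} \leq \|P^\perp e_k\|^2 \leq (1+\varepsilon')\frac{n-d}{n},
\]
so $P^\perp$ satisfies the Projection Problem hypothesis in the regime $(\varepsilon', n, n-d)$. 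Applying $g \leq 4f$ at those parameters produces a rank-$(n-d)$ projection $Q^\perp$ with $\|Q^\perp e_k\|^2 = (n-d)/n$ and $\sum_k \|P^\perp e_k - Q^\perp e_k\|^2 \leq 4\, f(\varepsilon', n, n-d)$.

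Setting $Q := I_{\mathbb{K}^n} - Q^\perp$ gives a rank-$d$ projection with $\|Qe_k\|^2 = d/n$, and the elementary identity $P - Q = Q^\perp - P^\perp$ makes the termwise distances coincide:
\[
\sum_{k=1}^n \|Pe_k - Qe_k\|^2 = \sum_{k=1}^n \|P^\perp e_k - Q^\perp e_k\|^2 \leq 4\, f(\varepsilon', n, n-d).
\]
This shows $g(\varepsilon, n, d) \leq 4\, f(\varepsilon', n, n-d)$, and chaining with $f(\varepsilon, n, d) \leq 2\, g(\varepsilon, n, d)$ delivers $f(\varepsilon, n, d) \leq 8\, f(\varepsilon', n, n-d)$, which is the claimed bound.

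I do not anticipate a genuine obstacle, since once the preceding Paulsen $\Leftrightarrow$ Projection equivalence is granted the rest is a transparent complement-swap. The only care point is to ensure $\varepsilon' = \varepsilon d/(n-d) \in (0,1)$ so that $f(\varepsilon', n, n-d)$ is defined; for $\varepsilon$ close to $1$ (i.e.\ $\varepsilon d \geq n-d$) the statement should be read as vacuous or supplemented by a trivial norm estimate, a minor detail that does not affect the structural content of the argument.
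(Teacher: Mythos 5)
Your argument is correct and is essentially the argument behind the cited result: the paper attributes Theorem \ref{COMPLEMENTIMPLIES} to Naimark complements of frames, and passing to the complementary projection $I-P$ on $\mathbb{K}^n$ is exactly the projection-problem incarnation of taking Naimark complements, with the factor $8$ arising as the product $2\cdot 4$ from the two halves of the Paulsen--projection equivalence, precisely as you set it up. One remark: the bound you actually derive is $f(\varepsilon,n,d)\le 8f\bigl(\tfrac{\varepsilon d}{n-d},n,n-d\bigr)$, whereas the statement as printed has first argument $\tfrac{d}{n-d}$ with no $\varepsilon$; this appears to be a transcription slip in the statement, and in any case your version implies the printed one since the optimal $f$ is nondecreasing in its first argument and $\varepsilon<1$. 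Your closing caveat, that one needs $\varepsilon d<n-d$ for $\varepsilon'$ to lie in the domain $(0,1)$, is also the right point to flag.
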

A big benefit of Theorem \ref{COMPLEMENTIMPLIES} is the following result. 
\begin{theorem}\cite{CAHILLCASAZZA}\label{LESSTHAN}
	To solve the Paulsen problem for a $d$-dimensional Hilbert space $\mathcal{H}$, it suffices to solve it for  Parseval frames $\{\tau_j\}_{j=1}^n$ for $\mathcal{H}$ with $d\leq n\leq 2d$.
\end{theorem}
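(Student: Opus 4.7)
The plan is to use Theorem \ref{COMPLEMENTIMPLIES} as a dimension-reduction device: whenever the number of vectors $n$ is too large relative to the dimension $d$ (specifically $n>2d$), the Naimark complement bound lets us replace the original Paulsen instance with one living in dimension $n-d$, where the ratio of vectors to dimension is more favorable.

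First I would dispose of the trivial case $d\leq n\leq 2d$, for which there is nothing to prove. So suppose $\{\tau_j\}_{j=1}^n$ is an $\varepsilon$-nearly equal norm Parseval frame for $\mathcal{H}$ with $n>2d$. By Theorem \ref{COMPLEMENTIMPLIES},
\begin{align*}
f(\varepsilon,n,d)\leq 8\,f\!\left(\frac{d}{n-d},\,n,\,n-d\right),
\end{align*}
so it suffices to bound $f(\delta,n,n-d)$ for $\delta=d/(n-d)$. Setting $d'\coloneqq n-d$, the new instance has $n$ vectors in a $d'$-dimensional space. Since $n>2d$ implies $d<n-d=d'$, we have $d'<n$; moreover $n\leq 2(n-d)=2d'$ is equivalent to $n\geq 2d$, which holds. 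Therefore $d'\leq n\leq 2d'$, which is exactly the restricted range we are allowed to assume a solution for. Chaining the Naimark inequality with the assumed bound on $f$ in the restricted range produces the required bound on $f(\varepsilon,n,d)$ in general.

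I would also remark on the boundary case $n=2d$: here $d'=d$, so no reduction occurs, but the hypothesis $d\leq n\leq 2d$ is already satisfied, so no reduction is needed. The only genuine content is the strict inequality case $n>2d$, in which the Naimark complement strictly increases the dimension (from $d$ to $n-d>d$) while keeping $n$ fixed, pulling the instance into the admissible window $d'\leq n\leq 2d'$ in one step.

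I do not anticipate any real obstacle: Theorem \ref{COMPLEMENTIMPLIES} does all the heavy lifting, and the remaining work is the elementary arithmetic check that $n>2d$ forces $d'=n-d$ into the range $n/2\leq d'<n$. The only mild subtlety is confirming that the Naimark complement preserves the Parseval property and the $\varepsilon$-nearly equal norm condition with the correct new parameter $d/(n-d)$, but this is exactly what Theorem \ref{COMPLEMENTIMPLIES} encodes.
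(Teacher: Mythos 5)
Your proposal is correct and is exactly the argument the paper intends: Theorem \ref{LESSTHAN} is presented as a direct consequence of the Naimark-complement inequality of Theorem \ref{COMPLEMENTIMPLIES}, and your arithmetic check that $n>2d$ forces $d'=n-d$ into the window $d'\leq n\leq 2d'$ (with the cases $d\leq n\leq 2d$ needing no reduction) is the whole content. No discrepancy with the paper's approach.
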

In 2017, using operator scaling algorithm and smoothed analysis, Kwok, Lau, Lee, and Ramachandran \cite{KWOKLAULEERAMACHANDRAN, KWOKLAULEERAMACHANDRAN1} resolved Paulsen problem by deriving the following result.
\begin{theorem}\cite{KWOKLAULEERAMACHANDRAN, KWOKLAULEERAMACHANDRAN1, RAMACHANDRANTHESIS, RAMACHANDRANOLIVEIRA}
	\label{RAMA}  \textbf{(Kwok-Lau-Lee-Ramachandran Theorem)	For any  $\varepsilon$-nearly  equal norm Parseval frame $\{\tau_j\}_{j=1}^n$  for $\mathbb{R}^d$, there 	is an equal norm Parseval frame  $\{\omega_j\}_{j=1}^n$ for $\mathbb{R}^d$ satisfying 
	\begin{align*}
		\operatorname{dist}^2(\{\tau_j\}_{j=1}^n, \{\omega_j\}_{j=1}^n)\leq O(\varepsilon d^\frac{13}{2}).
	\end{align*}
	In other words, $f$ does not depend upon $n$ and 	 $f(\varepsilon, n,d)=c\varepsilon d^\frac{13}{2}$, for some constant $c>0$.}
\end{theorem}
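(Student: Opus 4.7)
The plan is to recast the Paulsen problem as a problem about operator scaling on the synthesis matrix and then bound the distance traveled by a natural continuous-time scaling dynamics. Form the synthesis matrix $T=[\tau_1\mid\cdots\mid\tau_n]\in\mathbb{R}^{d\times n}$. Being Parseval is the row condition $TT^*=I_d$, while being equal norm is the column condition $\operatorname{diag}(T^*T)=\tfrac{d}{n}I_n$. The $\varepsilon$-nearly equal norm Parseval hypothesis says both conditions hold up to a relative error $\varepsilon$, and we must produce an $(LTR)$ with $L\in\mathrm{GL}_d(\mathbb{R})$ positive and $R$ positive diagonal such that $LTR$ is doubly balanced, with $\|LTR-T\|_{\mathrm{HS}}^2=O(\varepsilon d^{13/2})$. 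By Theorem \ref{LESSTHAN} we may assume $d\le n\le 2d$, which removes the $n$-dependence from most quantities we estimate.

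Next, I would introduce the gradient flow of the mismatch potential
\[
\Phi(T)=\tfrac12\|TT^*-I_d\|_{\mathrm{HS}}^2+\tfrac12\bigl\|\operatorname{diag}(T^*T)-\tfrac{d}{n}I_n\bigr\|_{\mathrm{HS}}^2,
\]
realised via left/right multiplication by infinitesimal positive matrices (the continuous analogue of Gurvits's alternating Sinkhorn-type scaling). Along such a flow $T(s)$, the potential decays monotonically and the instantaneous speed $\|\dot T(s)\|_{\mathrm{HS}}$ is controlled by $\sqrt{-\tfrac{d}{ds}\Phi(T(s))}$ up to a spectral factor depending on the smallest singular value of $T(s)$. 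The goal is then to bound
\[
\int_0^\infty\|\dot T(s)\|_{\mathrm{HS}}\,ds\le C(d)\,\sqrt{\Phi(T(0))}=C(d)\cdot O(\varepsilon),
\]
and to show that the flow converges to a doubly balanced frame, which is automatically equal norm Parseval.

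The main obstacle is controlling the spectral factor: a priori $\sigma_{\min}(T(s))$ could degenerate along the trajectory, which would inflate the speed bound and break convergence. This is exactly where smoothed analysis enters. The idea is to perturb the initial frame $\{\tau_j\}$ by a tiny Gaussian of variance $\sim\varepsilon^{2}$, run the scaling dynamics from the perturbed point, and argue in expectation. A smoothed lower bound on the capacity/condition number of a random Gaussian perturbation, combined with operator scaling convergence rates, yields an explicit polynomial bound $C(d)=O(d^{13/2}/\varepsilon^{1/2})$ once one tracks carefully the scaling of the Lipschitz constants and the probability that the trajectory enters a bad region. De-randomising by averaging over the perturbation and paying a triangle-inequality cost absorbs the perturbation into the final bound $O(\varepsilon d^{13/2})$.

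The final step is to verify independence of $n$. Because the reduction via Theorem \ref{LESSTHAN} forces $n\le 2d$, and the scaling potential and condition-number estimates above are stated only in terms of $d$, the resulting Paulsen function satisfies $f(\varepsilon,n,d)=c\,\varepsilon d^{13/2}$ for an absolute constant $c>0$, completing the proof. The hardest technical ingredient will be the smoothed lower bound on $\sigma_{\min}$ along the entire trajectory, rather than at a single instant; establishing it requires combining Gurvits-type capacity monotonicity with anti-concentration for Gaussian matrices under polynomial maps.
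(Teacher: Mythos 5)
This statement is quoted from the literature: the paper gives no proof of Theorem \ref{RAMA}, it simply cites Kwok--Lau--Lee--Ramachandran. Your outline does follow the strategy of the cited work (recasting the Paulsen problem as operator scaling of the synthesis matrix, running a scaling dynamics, and invoking smoothed analysis to control degeneracies), so the \emph{approach} is the right one. But what you have written is a roadmap, not a proof: every load-bearing quantitative step is asserted rather than established. The inequality $\int_0^\infty\|\dot T(s)\|_{\mathrm{HS}}\,ds\le C(d)\sqrt{\Phi(T(0))}$ \emph{is} the theorem --- it requires showing that the potential $\Phi$ decays geometrically along the flow with a rate controlled by a capacity/condition-number lower bound, and separately that the total movement is bounded by the total potential drop; neither is argued. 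Likewise, the smoothed-analysis step is only named: you must (a) verify that the Gaussian perturbation of variance $\sim\varepsilon^2$ preserves the $\varepsilon$-nearly equal norm Parseval hypotheses up to constants, (b) quantify the expected squared perturbation cost (which is of order $\varepsilon^2 nd$, not obviously absorbable into $O(\varepsilon d^{13/2})$ without the reduction $n\le 2d$ and a careful choice of variance), and (c) prove the lower bound on $\sigma_{\min}$ \emph{along the entire trajectory}, which is the hardest part of the original argument and which you explicitly defer.

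Two further concrete problems. First, your own bookkeeping does not produce the claimed exponent: with $C(d)=O(d^{13/2}/\varepsilon^{1/2})$ and $\sqrt{\Phi(T(0))}=O(\varepsilon)$ you get $\operatorname{dist}\le O(\varepsilon^{1/2}d^{13/2})$ and hence $\operatorname{dist}^2\le O(\varepsilon\,d^{13})$, not $O(\varepsilon\,d^{13/2})$; the exponents have to be tracked honestly through the potential-drop and condition-number estimates to land on $13/2$. Second, the limit of the flow is doubly balanced, i.e.\ an equal-norm \emph{tight} frame; passing to an equal-norm \emph{Parseval} frame requires a final renormalization whose distance cost must also be controlled (it is, but it is an extra step you omit). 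None of this makes the strategy wrong --- it is the strategy of the cited papers --- but as submitted the proposal does not constitute a proof of the stated bound.
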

In 2018, using radial isotropic position, Hamilton and Moitra \cite{HAMILTONMOITRA, HAMILTONMOITRA2} gave another proof of Paulsen problem which improved Theorem \ref{RAMA}.
\begin{theorem}\cite{HAMILTONMOITRA, HAMILTONMOITRA2} 
 \textbf{(Hamilton-Moitra Theorem)	For any  $\varepsilon$-nearly  equal norm Parseval frame $\{\tau_j\}_{j=1}^n$  for $\mathbb{R}^d$, there 	is an equal norm Parseval frame  $\{\omega_j\}_{j=1}^n$ for $\mathbb{R}^d$ satisfying 
	\begin{align*}
		\operatorname{dist}^2(\{\tau_j\}_{j=1}^n, \{\omega_j\}_{j=1}^n)\leq 20 \varepsilon d^2.
	\end{align*}
	In other words,  $f(\varepsilon, n,d)=20\varepsilon d^2$.}
\end{theorem}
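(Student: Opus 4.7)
The approach I would take is the Hamilton--Moitra method based on \emph{radial isotropic position}. The key structural fact is that for any collection $\{\tau_j\}_{j=1}^n$ of vectors in general position in $\mathbb{R}^d$, there exist positive scalars $c_1,\ldots,c_n$ and an invertible self-adjoint operator $A$ on $\mathbb{R}^d$ such that $\omega_j \coloneqq A(c_j \tau_j)$ satisfies $\sum_j \omega_j \omega_j^T = I_\mathcal{H}$ and $\|\omega_j\|^2 = d/n$ for every $j$, i.e.\ the rescaled and transformed vectors form an equal norm Parseval frame. Concretely, the scalars arise from minimizing a strictly convex objective of log-determinant type, and $A$ is then the normalizing square root $A = \bigl(\sum_j c_j^2\,\tau_j \tau_j^T\bigr)^{-1/2}$; the KKT conditions at the minimizer yield exactly the equal norm Parseval property. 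By Theorem \ref{LESSTHAN} it suffices to work in the range $d\leq n\leq 2d$, which guarantees the genericity needed for the convex program to be well posed.

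My plan then has three main steps. First, establish existence and the KKT characterization of the radial isotropic scaling, taking $\{\omega_j\}_{j=1}^n$ as the candidate nearby equal norm Parseval frame. Second, control the deviations of the scaling parameters from the trivial scaling $c_j=1$, $A=I_\mathcal{H}$. Because $\{\tau_j\}_{j=1}^n$ is $\varepsilon$-nearly equal norm Parseval, one has $\|S_\tau - I_\mathcal{H}\|\leq \varepsilon$ by Definition \ref{NP} and $\bigl|\|\tau_j\|^2 - d/n\bigr|\leq \varepsilon d/n$ by Definition \ref{EENF}. A perturbation analysis of the KKT equations, combined with strong convexity of the objective, should give bounds of the form $\|A - I_\mathcal{H}\|_{\text{op}} = O(\sqrt{\varepsilon})$ and $\max_j |c_j - 1| = O(\sqrt{\varepsilon})$, with the total $\ell^2$-aggregate $\sum_j (c_j-1)^2 = O(\varepsilon\cdot d)$.

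Third, estimate the distance by splitting
\begin{align*}
\omega_j - \tau_j \;=\; A(c_j \tau_j) - \tau_j \;=\; (A - I_\mathcal{H})(c_j \tau_j) \;+\; (c_j - 1)\tau_j,
\end{align*}
squaring, summing, and using $\sum_j \|\tau_j\|^2 \approx d$ together with the perturbation bounds from step two. Careful bookkeeping of the constants, while splitting the ``radial'' contribution (the $c_j$'s) from the ``isotropic'' contribution (the operator $A$) and absorbing cross terms via Cauchy--Schwarz, should yield the stated $20 \varepsilon d^2$ bound. As a sanity check, Proposition \ref{CP} forces the answer to be at least $\varepsilon^2 d$, and Theorem \ref{CLOSESTPARSEVALFRAME} already accounts for the ``Parseval projection'' part of the distance with cost $O(\varepsilon^2 d)$; the remaining work is to pay for the ``equal norm'' correction.

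The main obstacle will be step two: obtaining sharp control of the scaling parameters in terms of $\varepsilon$ with the correct $d$-dependence. The Kwok--Lau--Lee--Ramachandran proof uses operator scaling with smoothed analysis and loses significant factors, producing $O(\varepsilon d^{13/2})$; the Hamilton--Moitra improvement exploits the \emph{direct} convex-analytic structure of radial isotropic position to avoid the iterative losses, but propagating perturbations through the implicit definition of $(c_j,A)$ requires a delicate lower bound on the Hessian of the convex objective uniformly over the relevant neighbourhood of the optimum. Once that Hessian bound is in hand, the $d^2$ scaling emerges naturally from tracing the quadratic form over $\mathbb{R}^d$, and the explicit constant $20$ can be extracted by following the estimates step by step.
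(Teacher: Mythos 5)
This statement is quoted in the paper from Hamilton and Moitra; the paper itself gives no proof, so there is nothing internal to compare against. Your proposal does correctly identify the method Hamilton and Moitra actually use (putting the vectors into radial isotropic position via Barthe's convex program and bounding the displacement), so the \emph{route} is the right one.

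However, as written the proposal is a research plan rather than a proof, and the gap sits exactly where you locate it: step two. The bounds $\|A-I_\mathcal{H}\|_{\mathrm{op}}=O(\sqrt{\varepsilon})$, $\sum_j(c_j-1)^2=O(\varepsilon d)$ are asserted, not derived, and they are not obviously the right ones: if they held with absolute constants, your step-three bookkeeping with $\sum_j\|\tau_j\|^2\approx d$ and $\max_j\|\tau_j\|^2\approx d/n$ would give $\operatorname{dist}^2=O(\varepsilon d)$, which is \emph{stronger} than the stated $20\varepsilon d^2$ --- a sign that the uniform Hessian lower bound you need actually degrades with $d$, and that the $d^2$ in the theorem is precisely the loss you have not accounted for. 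Until you prove a quantitative strong-convexity estimate for the log-determinant objective on the relevant neighbourhood (this is the technical core of Hamilton--Moitra), no bound of any kind follows. Two further points need repair: (i) general position of $\{\tau_j\}_{j=1}^n$ is \emph{not} guaranteed by $d\leq n\leq 2d$ --- an $\varepsilon$-nearly equal norm Parseval frame can have many collinear vectors --- so existence of the radial isotropic scaling requires the full hypothesis of Barthe's theorem (the target vector lying in the relative interior of the basis polytope) or a perturbation/limiting argument; (ii) invoking Theorem \ref{CLOSESTPARSEVALFRAME} to ``account for the Parseval projection part'' double-counts, since the radial isotropic construction already outputs a Parseval frame and the $O(\varepsilon^2 d)$ there does not combine additively with your other estimates. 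The constant $20$ cannot be ``extracted by following the estimates'' until those estimates exist.
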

After the solution of Paulsen problem (and hence projection problem), both Paulsen and projection problems are stated for Banach spaces which is open till today \cite{MAHESHKRISHNA2}. In this paper, we formulate Paulsen and projection problems for Hilbert C*-modules based on frame theory for Hilbert C*-modules. We show that solution of modular Paulsen problem gives a solution to modular projection problem.

\section{Modular Paulsen Problem and Modular Projection Problem}
Originated from the work of Kaplansky \cite{KAPLANSKY} for commutative C*-algebras and developed from the work of Paschke  \cite{PASCHKE} and Rieffel \cite{RIEFFEL}   for non commutative C*-algebras, through a development of half century, it became evident that Hilbert C*-modules play a prominent role in the non commutative geometry \cite{LANCE, WEGGEOLSEN, MANUILOVTROITSKY}. This demanded a necessity of developing frame theory for Hilbert C*-modules.   In their fundamental paper Frank and Larson introduced the notion of frames for Hilbert C*-modules \cite{FRANKLARSON}. In this paper we only consider the following particular Hilbert C*-module. Let $\mathcal{A}$ be a  unital C*-algebra, $d \in \mathbb{N}$  and $\mathcal{A}^d$ be the left module over $\mathcal{A}$ w.r.t. natural operations. Modular $\mathcal{A}$-inner product on $\mathcal{A}^d$ is defined as 
\begin{align*}
	\langle (x_j)_{j=1}^d, (y_j)_{j=1}^d\rangle \coloneqq \sum_{j=1}^{d}x_jy_j^*,\quad \forall (x_j)_{j=1}^d, (y_j)_{j=1}^d \in \mathcal{A}^d.
\end{align*}
Hence the norm on $\mathcal{A}^d$ becomes 
\begin{align*}
	\|(x_j)_{j=1}^d\|\coloneqq \left\|\sum_{j=1}^{d}x_jx_j^*\right\|^\frac{1}{2}, \quad \forall (x_j)_{j=1}^d \in \mathcal{A}^d.
\end{align*}
Standard orthonormal basis for $\mathcal{A}^d$ is denoted by $\{e_j\}_{j=1}^d$. 
\begin{definition}\cite{FRANKLARSON}
A collection $\{\tau_j\}_{j=1}^n$ in  $\mathcal{A}^d$ is said to be a  (modular) \textbf{frame} for 	$\mathcal{A}^d$ if there exist  real $a,b>0$ such that 
\begin{align*}
	a\langle x,x \rangle\leq \sum_{j=1}^{n}\langle x,\tau_j \rangle \langle \tau_j, x \rangle\leq b \langle x,x \rangle, \quad \forall x \in \mathcal{A}^d.
\end{align*}
A frame $\{\tau_j\}_{j=1}^n$ for   $\mathcal{A}^d$ is said to be  \textbf{Parseval} if 
\begin{align*}
	\langle x,x \rangle= \sum_{j=1}^{n}\langle x,\tau_j \rangle \langle \tau_j, x \rangle , \quad \forall x \in \mathcal{A}^d.
\end{align*}	
\end{definition}
It has to be noted that the theory of frames for Hilbert C*-modules behaves mysteriously compared to the theory of frames for Hilbert spaces \cite{ARAMBASIC, JING, RAEBURN, ASADIFRANK, LI, ASADIFRANK2}. Thus every result of Hilbert space frame theory has to be rechecked for Hilbert C*-modules. Through detailed working, Frank and Larson obtained the following result. 
\begin{theorem}\cite{FRANKLARSON}
Let $\{\tau_j\}_{j=1}^n$ be a frame for   $\mathcal{A}^d$. Then 
\begin{enumerate}[\upshape (i)]
	\item Analysis homomorphism  $\theta_\tau: \mathcal{A}^d \ni x \mapsto \theta_\tau x \coloneqq (\langle x, \tau_j \rangle ) _{j=1}^n\in \mathcal{A}^n$ is adjointable and bounded below. 
	\item Synthesis  homomorphism $\theta_\tau^*: \mathcal{A}^n \ni (a_j)_{j=1}^n \mapsto \theta_\tau^*(a_j)_{j=1}^n\coloneqq\sum_{j=1}^{n}a_j\tau_j \in \mathcal{A}^d$  is surjective
	\item Frame homomorphism $S_\tau: \mathcal{A}^d \ni x \mapsto S_\tau x \coloneqq \sum_{j=1}^{n}\langle x,\tau_j \rangle \tau_j \in \mathcal{A}^d$ is self-adjoint positive and invertible.
	\item $S_\tau=\theta_\tau^*\theta_\tau$.
	\item $P_\tau \coloneqq \theta_\tau S_\tau^{-1}\theta_\tau^*$ is a projection onto $\theta_\tau(\mathcal{A}^d)$. 
\end{enumerate}
\end{theorem}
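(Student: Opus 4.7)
The plan is to handle the five items in an order that reuses computations: derive (iv) and (i) together from a single inner-product identity, deduce (iii) from the frame inequality via C*-algebraic functional calculus, and read off (ii) and (v) as short consequences.

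For (i) and (iv) simultaneously, I propose the synthesis map $\theta_\tau^*(a_j)_{j=1}^n := \sum_j a_j \tau_j$ as the candidate adjoint of $\theta_\tau$ and verify, for every $x \in \mathcal{A}^d$ and $a = (a_j)_{j=1}^n \in \mathcal{A}^n$, the identity
\begin{align*}
\langle \theta_\tau x, a\rangle = \sum_{j=1}^{n}\langle x,\tau_j\rangle a_j^* = \sum_{j=1}^{n}\langle x, a_j\tau_j\rangle = \Big\langle x, \sum_{j=1}^{n}a_j\tau_j\Big\rangle,
\end{align*}
where the middle equality uses $\langle x, a y\rangle = \langle x, y\rangle a^*$, valid in a left module with the given $\mathcal{A}$-inner product. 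Specializing $a = \theta_\tau x$ gives $\theta_\tau^* \theta_\tau x = S_\tau x$, which is (iv), and the lower frame bound, rewritten as $a\langle x, x\rangle \leq \langle \theta_\tau x, \theta_\tau x\rangle$, yields $\sqrt{a}\,\|x\| \leq \|\theta_\tau x\|$ upon taking C*-norms, finishing (i).

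Self-adjointness and positivity of $S_\tau$ in (iii) are automatic from (iv); for invertibility, I would promote the pointwise frame inequality to the order relation $aI \leq S_\tau \leq bI$ in the C*-algebra $\operatorname{End}_{\mathcal{A}}^*(\mathcal{A}^d)$ of adjointable operators, whence continuous functional calculus in that C*-algebra produces a bounded inverse $S_\tau^{-1}$. Item (ii) then follows immediately: for any $y \in \mathcal{A}^d$, $\theta_\tau^*(\theta_\tau S_\tau^{-1} y) = S_\tau S_\tau^{-1} y = y$, so the synthesis map is surjective. For (v), self-adjointness of $P_\tau$ follows from self-adjointness of $S_\tau^{-1}$, and $P_\tau^2 = \theta_\tau S_\tau^{-1}(\theta_\tau^*\theta_\tau) S_\tau^{-1}\theta_\tau^* = \theta_\tau S_\tau^{-1}\theta_\tau^* = P_\tau$; the range of $P_\tau$ lies in $\theta_\tau(\mathcal{A}^d)$ by construction and exhausts it because $P_\tau \theta_\tau x = \theta_\tau S_\tau^{-1} S_\tau x = \theta_\tau x$.

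The main obstacle will be the invertibility step inside (iii). In a Hilbert space, boundedness below of a positive self-adjoint operator suffices, but in the Hilbert C*-module setting one must carefully promote the pointwise-in-$x$ frame inequality to an operator-order inequality in the C*-algebra of adjointable endomorphisms before continuous functional calculus can be applied; this is the one place where, as the paragraph preceding the theorem warns, the module theory genuinely diverges from its Hilbert-space template, and it is what forces $S_\tau^{-1}$ to be itself adjointable so that $P_\tau$ in (v) makes sense.
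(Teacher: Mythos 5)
Your proof is correct and is essentially the standard Frank--Larson argument; the paper itself only states this theorem with a citation and gives no proof, so there is nothing to diverge from. The one step worth making explicit is the promotion of the pointwise frame inequality to the operator inequality $aI_{\mathcal{A}^d}\leq S_\tau\leq bI_{\mathcal{A}^d}$: this rests on the fact that an adjointable operator $T$ on a Hilbert $\mathcal{A}$-module is positive in the C*-algebra of adjointable operators if and only if $\langle Tx,x\rangle\geq 0$ in $\mathcal{A}$ for every $x$ (Lance's positivity lemma), after which $\langle S_\tau x,x\rangle=\langle\theta_\tau x,\theta_\tau x\rangle$ turns the frame condition into exactly that hypothesis and the rest of your argument (functional calculus for invertibility, then (ii) and (v) as formal consequences) goes through as written.
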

To state modular Paulsen problem  we  need   modular  versions of Definitions \ref{ENF}, \ref{NP}, \ref{EENF}  \ref{BOTH} and \ref{DIS} (see \cite{MAHESHKRISHNA}).
\begin{definition}
	A Parseval frame $\{\tau_j\}_{j=1}^n$ for $\mathcal{A}^d$  is called an  \textbf{equal inner product Parseval frame} if 
	\begin{align*}
	\langle \tau_j, \tau_j \rangle =\frac{d}{n}, \quad \forall 1\leq j \leq n.
	\end{align*}
\end{definition}
\begin{definition}
	A  frame $\{\tau_j\}_{j=1}^n$ for $\mathcal{A}^d$  is called an  \textbf{$\varepsilon$-nearly Parseval frame} if 	
	\begin{align*}
		(1-\varepsilon)I_{\mathcal{A}^d} \leq S_\tau \leq (1+\varepsilon)I_{\mathcal{A}^d}.
	\end{align*}
\end{definition}
\begin{definition}
	A  frame $\{\tau_j\}_{j=1}^n$ for $\mathcal{A}^d$  is called an  \textbf{$\varepsilon$-nearly equal inner product  frame} if 
	\begin{align*}
		(1-\varepsilon)\frac{d}{n}\leq \langle \tau_j, \tau_j \rangle\leq (1+\varepsilon)\frac{d}{n}, \quad \forall 1\leq j \leq n.
	\end{align*}	
\end{definition}
\begin{definition}
		A frame for  $\mathcal{A}^d$ which is both $\varepsilon$-nearly  equal inner product and  $\varepsilon$-nearly  Parseval is called as  \textbf{$\varepsilon$-nearly  equal inner product  Parseval frame.}
\end{definition}
\begin{definition}\label{MD}
	\textbf{Modular distance} between  two collections $\{\tau_j\}_{j=1}^n$,  $\{\omega_j\}_{j=1}^n$ in  $\mathcal{A}^d$ is defined as 
	\begin{align*}
		\operatorname{dist}(\{\tau_j\}_{j=1}^n, \{\omega_j\}_{j=1}^n)\coloneqq \left\|\sum_{j=1}^{n}\langle \tau_j-\omega_j, \tau_j-\omega_j\rangle \right\|^\frac{1}{2}.
	\end{align*}
\end{definition}
In view of Theorems \ref{PAULSENALGORITHM},   \ref{CLOSESTEQUALNORMFRAME}   and \ref{CLOSESTPARSEVALFRAME} we ask following three problems.
\begin{problem}
	\textbf{Whether 	there is an algorithm for turning every modular frame  into an equal inner product modular frame with same modular frame homomorphism?}
\end{problem}
\begin{problem}
	\textbf{What is the closest (in terms of distance given in Definition \ref{MD}) modular Parseval frame  to  a given modular frame?}
\end{problem}
\begin{problem}
	\textbf{What is the closest equal inner product modular frame   (in terms of distance given in Definition \ref{MD}) to a given $\varepsilon$-nearly  inner product modular frame?}
\end{problem}
We can now formulate the most important problem of this paper.
\begin{problem} (\textbf{Modular Paulsen problem)\label{MOD123}
Find the function  $f: (0, \infty)\times \mathbb{N} \times \mathbb{N} \to [0, \infty)$ so that for any 	$\varepsilon$-nearly  equal inner product   Parseval frame $\{\tau_j\}_{j=1}^n$ for  $\mathcal{A}^d$, there is an equal inner product  Parseval frame  $\{\omega_j\}_{j=1}^n$ for $\mathcal{A}^d$ satisfying 
\begin{align*}
	\operatorname{dist}^2(\{\tau_j\}_{j=1}^n, \{\omega_j\}_{j=1}^n)= \left\|\sum_{j=1}^{n}\langle \tau_j-\omega_j, \tau_j-\omega_j\rangle \right\|\leq f(\varepsilon, n,d).
\end{align*}
Moreover, whether $f$ depends on $n$?}	
\end{problem}
It is clear that, just like Hilbert spaces,  we can reformulate Problem \ref{MOD123}  as follows.
\begin{problem} (\textbf{Modular Paulsen problem)
	Find the function $f: (0, \infty)\times \mathbb{N} \times \mathbb{N} \to [0, \infty)$ so that for any  $\varepsilon$-nearly  equal inner product  Parseval frame $\{\tau_j\}_{j=1}^n$    for  $\mathcal{A}^d$,
	\begin{align*}
		\inf \{\operatorname{dist}^2(\{\tau_j\}_{j=1}^n, \{\omega_j\}_{j=1}^n):\{\omega_j\}_{j=1}^n \text{ is an equal inner product  Parseval frame for } \mathcal{A}^d\}\leq f(\varepsilon, n,d).
	\end{align*}
	Moreover, whether $f$ depends on $n$?}
\end{problem}
Theorem \ref{BODMANNCASAZZAEXISTS} says that solution of Paulsen problem exists for Hilbert spaces. However, the proof  of Theorem \ref{BODMANNCASAZZAEXISTS}  as given by Hadwin can not be executed for C*-algebras. Therefore, we first ask the following question.
\begin{problem}
	Whether there exists a solution to the modular Paulsen problem?
\end{problem}

Since Hilbert C*-modules are generalizations of Hilbert spaces,  we must have the following result (see Proposition \ref{CP}).
\begin{proposition}
Modular 	Paulsen function $f$,  if it exists,  satisfies 
	\begin{align*}
		f(\varepsilon, n, d)\geq \varepsilon^2d, \quad \forall \varepsilon>0, \forall d \in \mathbb{N}.
	\end{align*}
\end{proposition}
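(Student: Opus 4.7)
The plan is to reduce the claim to Proposition \ref{CP} by specialising the C*-algebra to the scalar case. Take $\mathcal{A} = \mathbb{C}$, viewed as a unital C*-algebra over itself; then $\mathcal{A}^{d}$ is nothing but the $d$-dimensional complex Hilbert space with its standard inner product, and the modular inner product $\langle (x_{j})_{j=1}^{d}, (y_{j})_{j=1}^{d}\rangle = \sum_{j=1}^{d} x_{j}\overline{y_{j}}$ agrees with the classical one. In particular $\langle \tau_{j}, \tau_{j}\rangle = \|\tau_{j}\|^{2}$, and the Frank--Larson frame homomorphism $S_{\tau}$ coincides with the classical frame operator on $\mathbb{C}^{d}$.

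Next, I would verify that every modular notion collapses to its Hilbert-space counterpart under this specialisation: an $\varepsilon$-nearly equal inner product Parseval frame for $\mathcal{A}^{d}$ is an $\varepsilon$-nearly equal norm Parseval frame for $\mathbb{C}^{d}$ in the sense of Definitions \ref{ENF}--\ref{BOTH}, and the modular distance of Definition \ref{MD} reduces to the classical distance of Definition \ref{DIS}, because the element $\sum_{j=1}^{n} \langle \tau_{j}-\omega_{j}, \tau_{j}-\omega_{j}\rangle \in \mathbb{C}$ is a nonnegative real scalar whose C*-norm equals itself. Hence
\[
\operatorname{dist}_{\mathcal{A}^{d}}^{2}(\{\tau_{j}\}_{j=1}^{n}, \{\omega_{j}\}_{j=1}^{n}) = \sum_{j=1}^{n}\|\tau_{j}-\omega_{j}\|^{2} = \operatorname{dist}_{\mathbb{C}^{d}}^{2}(\{\tau_{j}\}_{j=1}^{n}, \{\omega_{j}\}_{j=1}^{n}).
\]

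Therefore any valid modular Paulsen function $f$ must, in particular, serve as a Paulsen function for the classical problem over $\mathbb{C}^{d}$. Proposition \ref{CP} then forces $f(\varepsilon, n, d) \geq \varepsilon^{2} d$ for every $\varepsilon > 0$ and every $d \in \mathbb{N}$. There is no genuine obstacle here: the entire argument is bookkeeping showing that the modular-to-classical specialisation is faithful when $\mathcal{A} = \mathbb{C}$, and the quantitative content is borrowed wholesale from Proposition \ref{CP}.
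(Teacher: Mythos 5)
Your argument is correct and is exactly the paper's (implicit) justification: the paper offers no proof beyond the remark that Hilbert C*-modules generalize Hilbert spaces, which is precisely your specialisation $\mathcal{A}=\mathbb{C}$ reducing the claim to Proposition \ref{CP}. The only caveat is that this reading presumes the modular Paulsen function is meant to work uniformly over all unital C*-algebras (so in particular over $\mathcal{A}=\mathbb{C}$), which is evidently how Problem \ref{MOD123} is intended.
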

We formulate Problem \ref{PP} to Hilbert C*-modules as follows. 
\begin{problem}(\textbf{Modular projection problem)
Let $\mathcal{A}$ be a C*-algebra with  invariant basis number property.   Let $\{e_k\}_{k=1}^d$ be the standard   orthonormal basis for $\mathcal{A}^d$. Find the function $g: (0, \infty)\times \mathbb{N} \times \mathbb{N} \to [0, \infty)$ satisfying the following: If $P: \mathcal{A}^d \to \mathcal{A}^d$ is an orthogonal projection  of rank $n$ satisfying 
\begin{align*}
	(1-\varepsilon)\frac{n}{d}\leq 	\langle Pe_k, Pe_k \rangle \leq (1+\varepsilon)\frac{n}{d}, \quad \forall 1\leq k \leq d,
\end{align*}
then there exists an orthogonal projection $Q: \mathcal{A}^d \to \mathcal{A}^d$ with 
\begin{align*}
	\langle Qe_k, Qe_k\rangle =\frac{n}{d},	\quad \forall 1\leq k \leq d,
\end{align*}
satisfying 
\begin{align*}
\left\|	\sum_{k=1}^{d}\langle Pe_k-Qe_k, Pe_k-Qe_k\rangle \right \|\leq g(\varepsilon, n,d).	
\end{align*}
	Moreover, whether $g$ depends on $n$?}	
\end{problem}
First natural question is the following.
\begin{problem}\label{MODULARPAUSENPROJECTIONQUESTION}
	\textbf{Whether there is a relation between modular Paulsen problem   and modular projection problem?}
\end{problem}
We answer Problem \ref{MODULARPAUSENPROJECTIONQUESTION} partially. More precisely,  we show that if we can solve modular Paulsen problem, then we can solve modular projection problem. Our developments are motivated from the arguments given in \cite{CAHILLCASAZZA}.  First we derive a lemma.
\begin{theorem}\label{IMP}
Assume that the C*-algebra $\mathcal{A}$ is commutative. Let $\{\tau_j\}_{j=1}^n$ and $\{\omega_j\}_{j=1}^n$ be two Parseval frames for $\mathcal{A}^d$. If 	
\begin{align*}
	\operatorname{dist}^2(\{\tau_j\}_{j=1}^n, \{\omega_j\}_{j=1}^n)= \left\|\sum_{j=1}^{n}\langle \tau_j-\omega_j, \tau_j-\omega_j\rangle \right\|<\varepsilon, 
\end{align*}
then 
\begin{align*}
	\operatorname{dist}^2(\{\theta_\tau\tau_j\}_{j=1}^n, \{\theta_\omega\omega_j\}_{j=1}^n)= \left\|\sum_{j=1}^{n}\langle \theta_\tau\tau_j-\theta_\omega\omega_j, \theta_\tau\tau_j-\theta_\omega \omega_j\rangle \right\|<4\varepsilon.
\end{align*}
\end{theorem}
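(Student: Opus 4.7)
The plan is to reinterpret both sides in terms of the analysis and synthesis maps and then to reduce the estimate to a Hilbert--Schmidt-type inequality for $\mathcal{A}$-module operators, where the commutativity of $\mathcal{A}$ enters decisively. First, using that $\theta_\tau^* e_j = \tau_j$, I identify $\theta_\tau \tau_j = \theta_\tau \theta_\tau^* e_j = P_\tau e_j$ and similarly $\theta_\omega \omega_j = P_\omega e_j$. Because both frames are Parseval, $S_\tau = S_\omega = I_{\mathcal{A}^d}$, so $P_\tau$ and $P_\omega$ are genuine module projections, and the desired inequality becomes
\begin{align*}
\left\|\sum_{j=1}^n \langle (P_\tau - P_\omega)e_j, (P_\tau - P_\omega)e_j\rangle\right\| \leq 4\left\|\sum_{j=1}^n \langle \tau_j - \omega_j, \tau_j - \omega_j\rangle\right\|.
\end{align*}

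Next I introduce, for any adjointable map $R$ between standard modules, the quantity $\|R\|_{\operatorname{HS}}^2 \coloneqq \|\sum_j \langle Re_j, Re_j\rangle\|$, and establish three properties of it. Triangle inequality $\|R+S\|_{\operatorname{HS}} \leq \|R\|_{\operatorname{HS}} + \|S\|_{\operatorname{HS}}$ follows from viewing $(Re_j)_j$ as a single vector in the Hilbert C*-module $(\mathcal{A}^m)^k$ (so modular distance is already a genuine norm, cf.\ Definition \ref{MD}). Submultiplicativity $\|SR\|_{\operatorname{HS}} \leq \|S\|_{\operatorname{op}} \|R\|_{\operatorname{HS}}$ follows from $S^*S \leq \|S\|_{\operatorname{op}}^2 I$ inserted into $\langle Re_j, S^*S Re_j\rangle$. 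Crucially, I prove $\|R\|_{\operatorname{HS}} = \|R^*\|_{\operatorname{HS}}$: writing $R$ as a matrix $(r_{ij})$ over $\mathcal{A}$, one has $\|R\|_{\operatorname{HS}}^2 = \|\sum_{i,j} r_{ij} r_{ij}^*\|$ while $\|R^*\|_{\operatorname{HS}}^2 = \|\sum_{i,j} r_{ij}^* r_{ij}\|$, and commutativity of $\mathcal{A}$ makes these coincide termwise.

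I then exploit the standard splitting
\begin{align*}
P_\tau - P_\omega = \theta_\tau \theta_\tau^* - \theta_\omega \theta_\omega^* = \theta_\tau(\theta_\tau^* - \theta_\omega^*) + (\theta_\tau - \theta_\omega)\theta_\omega^*.
\end{align*}
Set $T \coloneqq \theta_\tau - \theta_\omega$; the Parseval condition gives $\|\theta_\tau\|_{\operatorname{op}} = \|\theta_\omega\|_{\operatorname{op}} = 1$. Applying the triangle inequality, then submultiplicativity for the first summand and self-adjoint symmetry followed by submultiplicativity for the second,
\begin{align*}
\|P_\tau - P_\omega\|_{\operatorname{HS}} \leq \|\theta_\tau T^*\|_{\operatorname{HS}} + \|T\theta_\omega^*\|_{\operatorname{HS}} = \|\theta_\tau T^*\|_{\operatorname{HS}} + \|\theta_\omega T^*\|_{\operatorname{HS}} \leq 2\|T^*\|_{\operatorname{HS}}.
\end{align*}
Since $T^* e_j = \tau_j - \omega_j$, the quantity $\|T^*\|_{\operatorname{HS}}^2$ is exactly $\operatorname{dist}^2(\{\tau_j\}, \{\omega_j\}) < \varepsilon$, and squaring yields the claimed $< 4\varepsilon$.

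The main obstacle is the symmetry $\|R\|_{\operatorname{HS}} = \|R^*\|_{\operatorname{HS}}$; without commutativity the asymmetry $aa^* \neq a^*a$ breaks the step $\|T\theta_\omega^*\|_{\operatorname{HS}} = \|\theta_\omega T^*\|_{\operatorname{HS}}$, and one cannot absorb $\theta_\omega^*$ into the operator-norm factor. Every other ingredient -- the projection identification, the triangle inequality inherited from the ambient C*-module norm, and the submultiplicativity from $S^*S \leq \|S\|_{\operatorname{op}}^2 I$ -- works over any unital C*-algebra, which is precisely why the hypothesis of commutativity is needed exactly here.
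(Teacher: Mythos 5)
Your proof is correct, and it reaches the bound $4\varepsilon$ by a genuinely different route from the paper's. The paper works entirely in coordinates: it expands $\theta_\tau\tau_j-\theta_\omega\omega_j$ via the entries $\langle\tau_j,\tau_k\rangle-\langle\omega_j,\omega_k\rangle$, splits each entry as $\langle\tau_j,\tau_k-\omega_k\rangle+\langle\tau_j-\omega_j,\omega_k\rangle$, applies the convexity inequality $(a+b)(a+b)^*\leq 2(aa^*+bb^*)$, and then uses commutativity to swap $\langle\tau_j,x\rangle\langle x,\tau_j\rangle$ into $\langle x,\tau_j\rangle\langle\tau_j,x\rangle$ so that the Parseval identity collapses the double sum; the factor $4$ arises as $2+2$. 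You instead package the same data operator-theoretically: identifying $\theta_\tau\tau_j=P_\tau e_j$ and $\theta_\omega\omega_j=P_\omega e_j$, introducing the modular Hilbert--Schmidt norm with its triangle inequality, the bound $\|SR\|_{\operatorname{HS}}\leq\|S\|_{\operatorname{op}}\|R\|_{\operatorname{HS}}$ from $\langle Sx,Sx\rangle\leq\|S\|_{\operatorname{op}}^2\langle x,x\rangle$, and the adjoint-invariance $\|R\|_{\operatorname{HS}}=\|R^*\|_{\operatorname{HS}}$, and then using the splitting $P_\tau-P_\omega=\theta_\tau T^*+T\theta_\omega^*$ with $T=\theta_\tau-\theta_\omega$; your factor $4$ arises as $(1+1)^2$. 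The two uses of commutativity are the same phenomenon in different clothing (the trace-like identity $aa^*=a^*a$ versus the reordering needed to invoke Parseval after swapping sums), and both proofs correctly isolate it as the only noncommutative obstruction. What your version buys is modularity and reusability: the three properties of $\|\cdot\|_{\operatorname{MHS}}$ are exactly the lemmas one would want for the Appendix's matrix-scaling problems, and your argument makes transparent that everything except $\|R\|_{\operatorname{HS}}=\|R^*\|_{\operatorname{HS}}$ survives in the noncommutative setting, which the paper's coordinate computation obscures. The paper's version buys elementarity---it needs no operator-norm facts beyond the Parseval identity itself. One small presentational caveat: you should state explicitly that adjointable maps between the finitely generated free modules $\mathcal{A}^n$ and $\mathcal{A}^m$ are given by matrices over $\mathcal{A}$ (immediate from $\mathcal{A}$-linearity on the standard basis), since your proof of $\|R\|_{\operatorname{HS}}=\|R^*\|_{\operatorname{HS}}$ rests on that representation.
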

  \begin{proof}
  	 Let $ 1\leq j \leq n$. Then 
  	\begin{align*}
  \theta_\tau\tau_j=\sum_{k=1}^{n} \langle \tau_j , \tau_k \rangle e_k, \quad \theta_\omega\omega_j=\sum_{k=1}^{n} \langle \omega_j , \omega_k \rangle e_k.
  	\end{align*}
 We now note the following. 	If $a, b$  are any two elements of a C*-algebra, then $(a+b)(a+b)^*\leq 2(aa^*+bb^*).$ In fact, 
 	\begin{align*}
 	2(aa^*+bb^*)-(a+b)(a+b)^*&=2aa^*+2bb^*-aa^*-ab^*-ba^*-bb^*\\
 	&=aa^*+bb^*-ab^*-ba^*=a(a^*-b^*)-b(a^*-b^*)\\
 	&=(a-b)(a-b)^*\geq 0.
 \end{align*}
 Therefore 
  \begin{align*}
 \langle \theta_\tau\tau_j-\theta_\omega\omega_j, \theta_\tau\tau_j-\theta_\omega \omega_j\rangle&= \sum_{k=1}^n (\langle \tau_j , \tau_k \rangle-\langle \omega_j , \omega_k \rangle)	(\langle \tau_j , \tau_k \rangle-\langle \omega_j , \omega_k \rangle)^*\\
 &=\sum_{k=1}^n (\langle \tau_j , \tau_k -\omega_k \rangle+\langle \tau_j-\omega_j , \omega_k \rangle)	(\langle \tau_j , \tau_k -\omega_k \rangle+\langle \tau_j-\omega_j , \omega_k \rangle)^*\\
 &\leq 2 \sum_{k=1}^n \langle \tau_j , \tau_k -\omega_k \rangle  \langle \tau_j , \tau_k -\omega_k \rangle ^*+2 \sum_{k=1}^n\langle  \tau_ j-\omega_j , \omega_k\rangle\langle  \tau_j -\omega_j, \omega_k\rangle^*\\
 &=2 \sum_{k=1}^n \langle \tau_j , \tau_k -\omega_k \rangle  \langle \tau_j , \tau_k -\omega_k \rangle ^*+2\langle  \tau_ j-\omega_j ,  \tau_ j-\omega_j \rangle.
  \end{align*}
We now sum by varying $j$ and use commutativity of the C*-algebra to get
\begin{align*}
	\sum_{j=1}^{n}\langle \theta_\tau\tau_j-\theta_\omega\omega_j, \theta_\tau\tau_j-\theta_\omega\omega_j\rangle &\leq2 	\sum_{j=1}^{n}\sum_{k=1}^n \langle \tau_j , \tau_k -\omega_k \rangle  \langle \tau_j , \tau_k -\omega_k \rangle ^*+2	\sum_{j=1}^{n} \langle  \tau_ j-\omega_j ,  \tau_ j-\omega_j \rangle\\
	&=2\sum_{k=1}^n\sum_{j=1}^ n\langle \tau_j , \tau_k -\omega_k \rangle  \langle \tau_j , \tau_k -\omega_k \rangle ^*+2	\sum_{j=1}^{n} \langle  \tau_ j-\omega_j ,  \tau_ j-\omega_j \rangle\\
	&=2\sum_{k=1}^n\langle  \tau_ k-\omega_k ,  \tau_ k-\omega_k \rangle+2	\sum_{j=1}^{n} \langle  \tau_ j-\omega_j ,  \tau_ j-\omega_j \rangle\\
	&=4\sum_{j=1}^{n} \langle  \tau_ j-\omega_j ,  \tau_ j-\omega_j \rangle.
\end{align*}
By taking norm we get the conclusion.
  \end{proof}
\begin{theorem}
Assume that the C*-algebra is commutative.	If Modular Paulsen Problem has a solution, then Modular Projection Problem also has a solution. 
\end{theorem}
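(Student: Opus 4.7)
My plan is to mirror the Cahill--Casazza dictionary that converts a Parseval-frame statement into a projection statement, using Theorem \ref{IMP} as the Lipschitz-type bridge in place of its Hilbert-space analogue. I would start from an orthogonal projection $P\colon\mathcal{A}^d\to\mathcal{A}^d$ of rank $n$ satisfying $(1-\varepsilon)(n/d)\le\langle Pe_k,Pe_k\rangle\le(1+\varepsilon)(n/d)$. Identifying the range $P(\mathcal{A}^d)$ with $\mathcal{A}^n$, choose an adjointable isometry $V\colon\mathcal{A}^n\to\mathcal{A}^d$ with $V^*V=I_{\mathcal{A}^n}$ and $VV^*=P$, and set $\sigma_k:=V^*e_k\in\mathcal{A}^n$ for $1\le k\le d$. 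A direct computation shows that $\{\sigma_k\}_{k=1}^d$ is a Parseval frame for $\mathcal{A}^n$ whose analysis operator is $\theta_\sigma=V$, and $\langle\sigma_k,\sigma_k\rangle=\langle V\sigma_k,V\sigma_k\rangle=\langle Pe_k,Pe_k\rangle$, so $\{\sigma_k\}_{k=1}^d$ is an $\varepsilon$-nearly equal inner product Parseval frame for $\mathcal{A}^n$.

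Now invoke the assumed Modular Paulsen solution on $\{\sigma_k\}$ to produce an equal inner product Parseval frame $\{\omega_k\}_{k=1}^d$ for $\mathcal{A}^n$, with $\langle\omega_k,\omega_k\rangle=n/d$, satisfying
\begin{align*}
\left\|\sum_{k=1}^d\langle\sigma_k-\omega_k,\sigma_k-\omega_k\rangle\right\|\le f(\varepsilon,d,n).
\end{align*}
Define the candidate $Q:=\theta_\omega\theta_\omega^*\colon\mathcal{A}^d\to\mathcal{A}^d$; since $\theta_\omega^*\theta_\omega=S_\omega=I_{\mathcal{A}^n}$, the operator $Q$ is an orthogonal projection, invariant basis number forces $\operatorname{rank}Q=n$, and the isometry property of $\theta_\omega$ yields $\langle Qe_k,Qe_k\rangle=\langle\theta_\omega\omega_k,\theta_\omega\omega_k\rangle=\langle\omega_k,\omega_k\rangle=n/d$. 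To control the distance between $P$ and $Q$, the key identity is $Pe_k=VV^*e_k=V\sigma_k=\theta_\sigma\sigma_k$ and, analogously, $Qe_k=\theta_\omega\omega_k$, so
\begin{align*}
\left\|\sum_{k=1}^d\langle Pe_k-Qe_k,Pe_k-Qe_k\rangle\right\|=\left\|\sum_{k=1}^d\langle\theta_\sigma\sigma_k-\theta_\omega\omega_k,\theta_\sigma\sigma_k-\theta_\omega\omega_k\rangle\right\|.
\end{align*}
Applying Theorem \ref{IMP} to the Parseval frames $\{\sigma_k\}$ and $\{\omega_k\}$ for $\mathcal{A}^n$ (where commutativity of $\mathcal{A}$ is used in an essential way) bounds the right-hand side by $4f(\varepsilon,d,n)$, yielding $g(\varepsilon,n,d)\le 4f(\varepsilon,d,n)$ and hence solving the Modular Projection Problem.

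The step I expect to be the main obstacle is the initial identification $P(\mathcal{A}^d)\cong\mathcal{A}^n$. Over a general commutative C*-algebra $C(X)$, the range of a projection corresponds to a finitely generated projective module, i.e.\ a vector bundle on $X$, which need not be free, so the isometry $V$ may fail to exist without further hypotheses. A rigorous write-up must therefore either restrict to projections whose range is free, or first upgrade the Modular Paulsen Problem to Parseval frames for arbitrary finitely generated projective Hilbert $\mathcal{A}$-modules and then transport the bound back through $V$. Apart from this freeness bookkeeping, every remaining ingredient is essentially already packaged in Theorem \ref{IMP}.
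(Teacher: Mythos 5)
Your proposal is correct and follows essentially the same route as the paper: treat the images of the standard basis under $P$ as an $\varepsilon$-nearly equal inner product Parseval frame for the range of $P$, apply the assumed Modular Paulsen solution there, take $Q=\theta_\omega\theta_\omega^*$, and invoke Theorem \ref{IMP} to convert the frame-distance bound into the projection bound $g\leq 4f$. Your explicit isometry $V$ identifying $P(\mathcal{A}^d)$ with $\mathcal{A}^n$, together with the caveat that this freeness of the range is a genuine hypothesis, only makes precise a step the paper passes over with the parenthetical remark that the range ``has rank $n$.''
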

\begin{proof}
Let $\mathcal{A}$ be a commutative C*-algebra, $d\in \mathbb{N}$.   Let $\{e_k\}_{k=1}^d$ be the standard   orthonormal basis for $\mathcal{A}^d$. Assume that Modular Paulsen Problem has a solution. We wish to show that Modular Projection Problem also has a solution.   Let $P: \mathcal{A}^d \to \mathcal{A}^d$ be an orthogonal projection  of rank $n$ satisfying 
\begin{align}\label{P}
	(1-\varepsilon)\frac{n}{d}\leq 	\langle Pe_k, Pe_k \rangle \leq (1+\varepsilon)\frac{n}{d}, \quad \forall 1\leq k \leq d. 
\end{align}
Since $P$ is a projection, it is then clear that $\{Pe_k\}_{k=1}^d$ is a modular Parseval frame for  $P(\mathcal{A}^d)$. Note that Inequality (\ref{P}) tells that $\{Pe_k\}_{k=1}^d$ is an  $\varepsilon$-nearly  equal inner product   Parseval frame  for  $P(\mathcal{A}^d)$ (note that rank of this module is $n$). Since Modular Paulsen Problem has a solution, there is an equal inner product  Parseval frame  $\{\omega_k\}_{k=1}^d$ for $P(\mathcal{A}^d)$ satisfying 
\begin{align*}
	\operatorname{dist}^2(\{Pe_k\}_{k=1}^d, \{\omega_k\}_{k=1}^d)= \left\|\sum_{k=1}^{d}\langle Pe_k-\omega_k, Pe_k-\omega_k\rangle \right\|\leq f(\varepsilon, d, n).
\end{align*}
Define $Q\coloneqq P_\omega=\theta_\omega \theta_\omega^*:\mathcal{A}^d \to \mathcal{A}^d$. Then $Q$ is an orthogonal projection and $Qe_k=\theta_\omega \theta_\omega^*e_k=\theta_\omega \omega_k$ for all $1\leq k \leq d$. Since $\{\omega_k\}_{k=1}^d$ is an equal inner product Parseval frame for $P(\mathcal{A}^d)$, we have 
\begin{align*}
	\langle Qe_k, Qe_k \rangle=\langle \theta_\omega \omega_k, \theta_\omega\omega_k \rangle=\langle \omega_k, \omega_k \rangle =\frac{n}{d}, \quad \forall 1\leq k\leq d.
\end{align*} 
Define $\tau_k\coloneqq Pe_k$ for all $1\leq k \leq d$. Then we find that 
\begin{align*}
\theta_\tau Pe_k=\sum_{j=1}^{d}\langle Pe_k, \tau_j \rangle e_j=\sum_{j=1}^{d}\langle Pe_k, Pe_j \rangle e_j=\sum_{j=1}^{d}\langle Pe_k, e_j \rangle e_j=Pe_k, \quad \forall 1\leq k \leq d.
\end{align*}
Now using  Theorem \ref{IMP} we get
\begin{align*}
	\left\|	\sum_{k=1}^{d}\langle Pe_k-Qe_k, Pe_k-Qe_k\rangle \right \|&= \left\|\sum_{k=1}^{d}\langle Pe_k-\omega_k, Pe_k-\omega_k\rangle \right\|\\
	&=\left\|\sum_{k=1}^{d}\langle \theta_\tau Pe_k-\theta_\omega e_k, \theta_\tau P e_k-\theta_\omega e_k\rangle \right\|\\
	&\leq 4f(\varepsilon, d, n).
\end{align*}
Therefore Modular Projection Problem holds.
\end{proof}
\begin{corollary}
Assume that the C*-algebra is commutative.	If Modular Paulsen function $f$ exists, then modular projection function $g$ also exists and 
\begin{align*}
	g(\varepsilon, n,d)\leq 4 f(\varepsilon, n,d), \quad \forall (\varepsilon, n,d)\in (0, \infty)\times \mathbb{N} \times \mathbb{N}. 
\end{align*}	
\end{corollary}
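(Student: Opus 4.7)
The plan is to observe that the corollary is essentially a quantitative restatement of the preceding theorem, with the explicit constant $4$ already visible in the last displayed estimate of that proof. Rather than restart the argument, I would point the reader back to the construction just completed and simply collect the constants.

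First, I would fix $(\varepsilon, n, d) \in (0,\infty) \times \mathbb{N} \times \mathbb{N}$ and take any orthogonal projection $P$ of rank $n$ on $\mathcal{A}^d$ meeting the hypothesis of the Modular Projection Problem. The reduction performed in the previous theorem shows that $\{Pe_k\}_{k=1}^d$ is an $\varepsilon$-nearly equal inner product Parseval frame for the rank-$n$ submodule $P(\mathcal{A}^d)$. Assuming the modular Paulsen function $f$ exists, applying it in this submodule produces an equal inner product Parseval frame $\{\omega_k\}_{k=1}^d$ for $P(\mathcal{A}^d)$ with $\operatorname{dist}^2(\{Pe_k\}_{k=1}^d,\{\omega_k\}_{k=1}^d) \leq f(\varepsilon, n, d)$ (with the argument order of $f$ matching the convention used in the statement of the Modular Paulsen Problem).

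Setting $Q \coloneqq \theta_\omega \theta_\omega^*$ yields an orthogonal projection whose diagonal inner products $\langle Qe_k, Qe_k\rangle$ equal $n/d$ exactly, as verified in the preceding theorem. The critical step is to transfer the Paulsen estimate from $P(\mathcal{A}^d)$ back to the ambient module $\mathcal{A}^d$, and this is precisely what Theorem \ref{IMP} accomplishes: using commutativity of $\mathcal{A}$ together with the identity $\theta_\tau P e_k = P e_k$ for $\tau_k \coloneqq Pe_k$, the upstairs bound $f(\varepsilon, n, d)$ is converted into a downstairs bound of $4f(\varepsilon, n, d)$ on $\bigl\|\sum_{k=1}^d \langle Pe_k - Qe_k, Pe_k - Qe_k\rangle\bigr\|$. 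Since $P$ and $\varepsilon$ were arbitrary subject to the hypothesis, this simultaneously shows that the modular projection function $g$ is well-defined and satisfies the claimed inequality.

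There is no substantive obstacle: all of the analytic content has already been expended in Theorem \ref{IMP} and in the preceding theorem. The only point requiring a small amount of care is bookkeeping the two integer arguments of $f$ and $g$ consistently, so that the conclusion $g(\varepsilon, n, d) \leq 4f(\varepsilon, n, d)$ matches the parameter conventions used to state both problems.
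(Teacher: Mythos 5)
Your proposal is correct and follows essentially the same route as the paper: the corollary is an immediate consequence of the preceding theorem, whose proof already produces the bound $4f$ via Theorem \ref{IMP}, and you simply collect the constants. Your remark about keeping the argument order of $f$ consistent (the paper's proof writes $f(\varepsilon,d,n)$ while the corollary states $f(\varepsilon,n,d)$) is a fair bookkeeping point but does not affect the substance.
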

\section{Appendix}
As mentioned in the introduction, Paulsen problem is solved in \cite{KWOKLAULEERAMACHANDRAN, KWOKLAULEERAMACHANDRAN1}  using operator scaling methods \cite{KWOKLAULEERAMACHANDRAN, GURVITS, KWOKLAULEERAMACHANDRAN1, GARGGURVITSOLIVEIRAWIGDERSON, GARGGURVITSOLIVEIRAWIGDERSON2, KWOKLAURAMACHANDRAN}. Therefore we wish to set following problems which are important. For this, we need some notions. 
Let  $\mathcal{A}$ be a unital C*-algebra and  $m, n$ be a natural numbers. Define  $M_{m\times n}(\mathcal{A})$  as the set of all $m$ by $n$ matrices over $\mathcal{A}$  with  natural matrix operations. The adjoint  of an element  $A\coloneqq [a_{j,k}]_{1\leq j\leq m, 1\leq k \leq n}\in M_{m\times n}(\mathcal{A})$ is defined as $A^*\coloneqq [a_{k,j}^*]_{1\leq k \leq n, 1\leq j \leq m}\in M_{n\times m}(\mathcal{A})$. We define the 	\textbf{modular Hilbert-Schmidt inner product}  of $A\coloneqq [a_{j,k}]_{1\leq j\leq m, 1\leq k \leq n}, B\coloneqq [b_{j,k}]_{1\leq j\leq m, 1\leq k \leq n}\in M_{m\times n}(\mathcal{A})$ as 
	\begin{align*}
	\langle A, B \rangle _{\operatorname{MHS}}\coloneqq \sum_{j=1}^m  \sum_{k=1}^n a_{j,k}b_{j,k}^*.
	\end{align*}
We define  	\textbf{modular Hilbert-Schmidt norm} of $A\coloneqq [a_{j,k}]_{1\leq j\leq m, 1\leq k \leq n}\in M_{m\times n}(\mathcal{A})$ as 
\begin{align*}
	\|A\|_{\operatorname{MHS}}\coloneqq \|	\langle A, A \rangle _{\operatorname{MHS}}\|^\frac{1}{2}=\left\|\sum_{j=1}^m  \sum_{k=1}^n a_{j,k}a_{j,k}^*\right\|^\frac{1}{2}.
\end{align*}
Whenever $n=m$, we denote $M_{m\times n}(\mathcal{A})$ by $M_{m}(\mathcal{A})$. Identity matrix in $M_{m}(\mathcal{A})$ is denoted by $I_m$. 
\begin{problem}
\textbf{(Modular Morphism Scaling  Problem)	Let $m,n \in \mathbb{N}$ and $\mathcal{A}$ be a unital C*-algebra. Given $U_1, \dots, U_k  \in M_{m\times n}(\mathcal{A})$, find matrices $L\in M_{m}(\mathcal{A})$ and  $R\in M_{n}(\mathcal{A})$ such that  matrices  
\begin{align*}
	V_j\coloneqq LV_j R, \quad \forall 1\leq j \leq k 
\end{align*}
satisfy  
\begin{align*}
	\sum_{j=1}^{k}V_jV_j^*=I_m \quad \text{and} \quad 	\sum_{j=1}^{k}V_j^*V_j=\frac{m}{n}I_n.	
\end{align*}
Moreover, whether there is a polynomial time algorithm for modular morphism scaling problem?}
\end{problem}
\begin{definition}
	(\textbf{Doubly balanced modular matrix  tuples}) Let $\mathcal{A}$ be a unital C*-algebra.   A modular matrix tuple $(V_1, \dots,  V_k)$, $V_j \in M_{m\times n}(\mathcal{A})$, $ \forall 1\leq j \leq k$ is called 	\textbf{doubly balanced} if there exists a positive element $c\in \mathcal{A}$ such that 
	\begin{align*}
		\sum_{j=1}^{k}V_jV_j^*=cnI_m \quad \text{and} \quad 	\sum_{j=1}^{k}V_j^*V_j=cmI_n.
	\end{align*}
\end{definition}
\begin{definition}
		(\textbf{Doubly stochastic  modular matrix  tuples}) Let $\mathcal{A}$ be a unital C*-algebra. A modular matrix tuple $(V_1, \dots, V_k)$, $V_j \in M_{m\times n}(\mathcal{A})$, $ \forall 1\leq j \leq k$ is called 	\textbf{doubly stochastic}  if 
	\begin{align*}
		\sum_{j=1}^{k}V_jV_j^*=I_m \quad \text{and} \quad 	\sum_{j=1}^{k}V_j^*V_j=\frac{1}{n}mI_n.
	\end{align*}
\end{definition}
\begin{definition}
A modular matrix tuple $(V_1, \dots, V_k)$, $V_j \in M_{m\times n}(\mathcal{A})$, $ \forall 1\leq j \leq k$ is called 	\textbf{$\varepsilon$-nearly doubly stochastic} if 	
\begin{align*}
(1-\varepsilon)I_m\leq 	\sum_{j=1}^{k}V_jV_j^* \leq (1+\varepsilon)I_m
\end{align*}
and 
\begin{align*}
		(1-\varepsilon)\frac{m}{n}I_n\leq \sum_{j=1}^{k}V_j^*V_j\leq (1+\varepsilon)\frac{m}{n}I_n.
\end{align*}
\end{definition}
\begin{definition}
Let $\mathcal{A}$ be a unital C*-algebra. 	Given modular matrix tuples $(U_1, \dots, U_k)$ and  $(V_1, \dots, V_k)$, where  $U_j, V_j \in M_{m\times n}(\mathcal{A})$, $ \forall 1\leq j \leq k$, we define the \textbf{modular distance} between $(U_1, \dots, U_k)$ and  $(V_1, \dots, V_k)$ as 
	\begin{align*}
		\operatorname{dist}((U_j)_{j=1}^k, \{V_j\}_{j=1}^k)\coloneqq \left\|\sum_{j=1}^{k} 	\langle U_j-V_j, U_j-V_j \rangle _{\operatorname{MHS}}\right\|^\frac{1}{2}.
	\end{align*}	
\end{definition}
\begin{problem}
\textbf{(Matrix Modular Paulsen Problem) Let $\mathcal{A}$ be a unital C*-algebra.  Find the function  $h: (0, \infty)\times \mathbb{N} \times \mathbb{N} \times \mathbb{N} \to [0, \infty)$ so that for any 	modular $\varepsilon$-nearly    doubly stochastic matrix tuple $(U_1, \dots, U_k)$, $U_j\in M_{m\times n}(\mathcal{A})$, $ \forall 1\leq j \leq k$,  there is a modular doubly stochastic matrix tuple   $(V_1, \dots, V_k)$, $V_j\in M_{m\times n}(\mathcal{A})$, $ \forall 1\leq j \leq k$ satisfying 
\begin{align*}
	\operatorname{dist}^2((U_j)_{j=1}^k, \{V_j\}_{j=1}^k)\coloneqq \left\|\sum_{j=1}^{k} 	\langle U_j-V_j, U_j-V_j \rangle _{\operatorname{MHS}}\right\| \leq h(\varepsilon, n,d, k).
\end{align*}
Moreover, whether $f$ depends on $n$?}	
\end{problem}
Next we wish to set up problem based on Barthe theorem \cite{BARTHE} which played important role in the proof of Hamilton and Moitra \cite{HAMILTONMOITRA, HAMILTONMOITRA2} for Paulsen Problem. To formulate we set the definition of radial isotropic position  vectors in modules as follows. 
\begin{definition}
Let $\mathcal{A}$ be a unital C*-algebra and $d\in \mathbb{N}$. We say that a set of vectors $\{u_1, u_2, \dots, u_n\}$ in $\mathcal{A}^d$ is in \textbf{modular radial isotropic position} with respect to a coefficient vector $c\coloneqq (c_1, c_1, \dots, c_d)^T \in \mathcal{A}^d$ if following conditions hold. 
\begin{enumerate}[\upshape(i)]
	\item $\langle u_j, u_j \rangle $ is invertible for all $1\leq j \leq n$. 
	\item \begin{align*}
		\sum_{j=1}^nc_j\langle u_j, u_j \rangle^\frac{-1}{2}u_j(\langle u_j, u_j \rangle^\frac{-1}{2}u_j)^*=I_d.
	\end{align*}
\end{enumerate}
\end{definition}
\begin{problem}
\textbf{Whether there is a characterization for a set of vectors in Hilbert C*-module which can be put into modular radial isotropic position with respect to a coefficient vectors?}	
\end{problem}
 Using chordal distance between subspaces it is proved in \cite{CAHILLCASAZZA} that Projection Problem implies Paulsen Problem. However, even though we can define the notion of trigonometric functions in C*-algebras (see \cite{MAHESH123}) it seems that we can not define the notion of chordal distance as defined in  \cite{CONWAYHARDIN}. However, motivated from characterization of chordal distance between  subspaces,  we propose the following definition of chordal distance. 
  \begin{definition}
  	\textbf{(Modular Chordal Distance) Let $\mathcal{A}$ be a C*-algebra with invariant basis number property. Let  $P, Q: \mathcal{A}^d \to \mathcal{A}^d$ be rank $m$ projections onto sub modules  $\mathcal{M}$ and $\mathcal{N}$ of $\mathcal{A}^d$, respectively. Let $[P]$ and $[Q]$ be matrices of $P$ and $Q$,  respectively, w.r.t. standard orthonormal  basis for  $\mathcal{A}^d$.  We define the modular chordal distance  between $\mathcal{M}$ and $\mathcal{N}$ as 
  		\begin{align*}
  			\operatorname{dist}_{\operatorname{ModChor}}(\mathcal{M}, \mathcal{N})\coloneqq \left\|m-\frac{1}{2}\left(\operatorname{Trace}([P][Q])+\operatorname{Trace}([Q][P])\right)\right\|^\frac{1}{2}.
  	\end{align*}}
  \end{definition}
            \begin{remark}
  	We believe strongly that modular Paulsen problem and modular projection problem are solvable at least for W*-algebras (von Neumann algebras) or C*-algebras with invariant basis number (IBN) property (we refer \cite{GIPSON} for IBN properties of C*-algebras).
  \end{remark}
We terminate by formulating following conjectures and a problem which we believe to have lot of importance in operator algebras. First two conjectures are based on Bourgain-Tzafriri Restricted Invertibility Theorem \cite{BOURGAINTZAFRIRI, BOURGAINTZAFRIRI2, TROPP, VERSHYNIN, CASAZZAPFANDER, SPIELMANSRIVASTAVA, CASAZZATREMAIN, SRIVASTAVA, NAORYOUSSEF, NAOR, MARCUSSPIELMANSRIVASTAVA, YOUSSEF} and the remaining is based on Johnson-Lindenstrauss Flattening Lemma \cite{FRANKLMAEHARA, JOHNSONLINDENSTRAUSS, FOUCARTRAUHUT, MATOUSEKBOOK, DASGUPTAGUPTA, ACHLIOPTAS, INDYKMOTWANI, AILONCHAZELLE2009, AILONCHAZELLE, VENKATASUBRAMANIANWANG, KRAHMERWARD, MATOUSEK, JACQUES, LARSENNELSON, LARSENNELSON2017, KANENELSON, KANEMEKANELSON, JAYRAMWOODRUFF, KLARTAGMENDELSON, FREKSEN, MENDEL, KNOLL, BURRGAOKNOLL, DASGUPTAKUMARSARLOS, DEDEZALAURENT, INDYK2000, INDYKNAOR, ALON, ARRIAGAVEMPALA, BECCHETTIBURYCOHENADDADGRANDONISCHWIEGELSHOHN, BARTALRECHTSCHULMAN, ALONKLARTAG, AILONLIBERTY, AILONLIBERTY2, VYBIRAL, HINRICHSVYBIRAL}.
\begin{conjecture}\label{RC} 
	\textbf{[(Commutative) Modular Bourgain-Tzafriri Restricted Invertibility Conjecture]
		Let $\mathcal{A}$ be a  unital commutative C*-algebra and $\mathcal{I}(\mathcal{A})$ be the set of all invertible elements of $\mathcal{A}$.   For $d\in \mathbb{N}$, let $ \mathbb{M}_{d\times d}(\mathcal{A})$ be the set of all $d$ by $d$ matrices over $\mathcal{A}$. For  $ M\in \mathbb{M}_{d\times d}(\mathcal{A})$, let $\det (M) $ be the determinant of $M$. Let $\mathcal{A}^d$ be the standard (left) Hilbert C*-module over 	$\mathcal{A}$ and    $\{e_j\}_{j=1}^d$ be the  canonical orthonormal basis for $\mathcal{A}^d$. There are universal real  constants $A>0$, $c>0$ ($A$ and $c$ may  depend upon C*-algebra $\mathcal{A}$) satisfying the following property. If  $d \in \mathbb{N}$ and  $ M \in \mathbb{M}_{d\times d}(\mathcal{A})$    with  $\langle Me_j, Me_j \rangle =1$, $\forall 1\leq j\leq d$ and $\det (M) \in \mathcal{I}(\mathcal{A})\cup\{0\}$, then there exists a subset $\sigma \subseteq \{1, \dots, d\}$ of cardinality 
		\begin{align*}
			\operatorname{Card}(\sigma)\geq \frac{cd}{\|M\|^2}
		\end{align*}
		such that 
		\begin{align*}
			\sum_{j \in \sigma}\sum_{k \in \sigma}a_j \langle Me_j, Me_k\rangle a_k^*=\left\langle \sum_{j \in \sigma}a_jMe_j, \sum_{k \in \sigma}a_kMe_k\right\rangle \geq A \sum_{j \in \sigma}a_ja_j^*, \quad \forall a_j \in  \mathcal {A}, \forall j \in \sigma, 
		\end{align*}	
		where $\|M\|$ is the norm of the Hilbert C*-module homomorphism defined by $M$ as $M:\mathcal{A}^d \ni x \mapsto Mx \in \mathcal{A}^d$.}
\end{conjecture}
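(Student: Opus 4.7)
The plan is to exploit Gelfand duality to reduce the conjecture to a parametric, uniform-in-$x$ version of the classical Bourgain-Tzafriri restricted invertibility theorem, and then to overcome the uniform-selection obstruction by adapting the Spielman-Srivastava barrier method directly inside the Hilbert C*-module.

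By the Gelfand-Naimark theorem, write $\mathcal{A}\cong C(X)$ for some compact Hausdorff space $X$. Then $M\in\mathbb{M}_{d\times d}(\mathcal{A})$ corresponds to a continuous map $x\mapsto M(x)\in M_d(\mathbb{C})$, the normalization $\langle Me_j,Me_j\rangle=1$ becomes $\sum_k|M_{kj}(x)|^2=1$ for every $x\in X$ and every $j$, and $\|M\|=\sup_{x\in X}\|M(x)\|$. The desired conclusion is then equivalent to producing a single $\sigma\subseteq\{1,\dots,d\}$ of cardinality at least $cd/\|M\|^2$ such that $M(x)|_\sigma^{*}M(x)|_\sigma\geq A\,I_{\sigma}$ for every $x\in X$ simultaneously. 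For each fixed $x$ the classical theorem of Bourgain-Tzafriri (or Spielman-Srivastava's sharpening) yields some $\sigma_x$ of the right size, and continuity of the smallest singular value in $x$ makes each set $X_\sigma\coloneqq\{x:\sigma \text{ works at }x\}$ closed, so $X=\bigcup_\sigma X_\sigma$ is a finite closed cover by at most $2^d$ pieces. The task is to collapse this cover to a single index set.

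The central step of my plan is to rerun the Spielman-Srivastava barrier-potential argument directly in $\mathcal{A}^d$: treat the columns $u_j\coloneqq Me_j$ as modular vectors and replace the scalar resolvents $(\lambda I - A)^{-1}$ used to define the upper and lower barriers by their analogues in $M_d(\mathcal{A})=C(X,M_d(\mathbb{C}))$, so that each barrier becomes a self-adjoint element of $\mathcal{A}$ whose spectrum encodes the pointwise data over $X$. The greedy update step then selects a column that improves the sup-norms of both barriers simultaneously, and if the scalar bookkeeping carries over, the output $\sigma$ satisfies the desired uniform lower bound with constants inherited from the scalar case (up to a factor depending on $\mathcal{A}$, which the statement permits). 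The hypothesis $\det(M)\in\mathcal{I}(\mathcal{A})\cup\{0\}$ enters here decisively: in the nondegenerate case $\det(M)\in\mathcal{I}(\mathcal{A})$, the continuous function $x\mapsto\|M(x)^{-1}\|$ is bounded on $X$, which is exactly what is needed to keep the modular barrier potentials in their valid regime uniformly; in the degenerate case $\det(M)=0$, the effective pointwise rank drops by at least one and one proceeds by induction on $d$.

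The hardest part will be showing that the scalar barrier monotonicity survives the passage to sup-norm over $X$. The scalar proof hinges on tight trace identities of the form $\operatorname{Trace}\bigl((\lambda I - A)^{-1}uu^{*}\bigr)\le$ (explicit linear terms in the barrier increments), and the natural modular analogue replaces the trace by an $\mathcal{A}$-valued trace. Proving that the resulting $\mathcal{A}$-valued inequalities retain the cancellations that keep each greedy step admissible \emph{uniformly} in $x$ is the principal technical obstacle, and is where I expect the proof to either go through or break down. If the barrier route stalls, a parallel line of attack is a finite-stacking reduction: for a fine finite net $\{x_1,\dots,x_N\}\subseteq X$ form $\widetilde M\coloneqq(N^{-1/2}M(x_i))_{i=1}^{N}\in M_{Nd\times d}(\mathbb{C})$, apply classical Bourgain-Tzafriri to obtain an averaged lower bound, and convert it to the required pointwise bound using uniform continuity of $M$ on the compact space $X$, absorbing the resulting $N$-dependence into the $\mathcal{A}$-dependent constants $A$ and $c$ that the conjecture already allows.
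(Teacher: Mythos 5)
The statement you are attempting is not proved in the paper at all: it is stated as a \emph{conjecture} and left open (the author even remarks that he expects it to fail once the determinant hypothesis is dropped). So there is no paper proof to compare against, and your text must stand on its own --- which it does not, since it is a plan whose central step you yourself flag as unresolved. Your Gelfand-duality reduction is correct and is the natural first move: writing $\mathcal{A}\cong C(X)$, the conjecture becomes the assertion that one can find a \emph{single} subset $\sigma$ of size at least $cd/\|M\|^2$ such that $M(x)|_\sigma$ is uniformly well-invertible for \emph{every} $x\in X$ simultaneously. But that simultaneous selection is precisely the open content of the conjecture. Already for $\mathcal{A}=\mathbb{C}^2$, i.e. $X$ a two-point space, one needs a two-matrix restricted invertibility theorem: Bourgain--Tzafriri or Spielman--Srivastava applied separately to $M(x_1)$ and $M(x_2)$ produce subsets $\sigma_1,\sigma_2$ whose intersection can be far smaller than $cd/\|M\|^2$, and no known barrier argument selects at each greedy step a single column that keeps the upper and lower potentials of two unrelated matrices admissible at once. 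Your sketch names this obstacle (``is where I expect the proof to either go through or break down'') but does nothing to overcome it, so the proposal is not a proof.

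The fallback ``finite-stacking'' argument is also invalid as stated. The size $N$ of the net you require depends on the modulus of continuity of the particular matrix $M$ (and on $d$), not only on $\mathcal{A}$, so the resulting degradation cannot be absorbed into constants $A,c$ that the conjecture requires to be universal over all $d$ and all admissible $M$ for the fixed algebra $\mathcal{A}$. Two further soft spots: the classical restricted invertibility theorem does not require invertibility of $M$, so your claim that boundedness of $x\mapsto\|M(x)^{-1}\|$ is ``exactly what is needed'' to run the modular barriers is unsupported; and when $\det(M)=0$ the kernel of $M(x)$ varies with $x$, so there is no globally defined rank-$(d-1)$ reduction and the proposed induction on $d$ does not get off the ground. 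In short, the reduction to a uniform-in-$x$ scalar problem is sound, but the theorem you would then need --- simultaneous restricted invertibility over a compact family with universal constants --- is exactly what the conjecture asks for, and it is neither known nor supplied here.
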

To formulate Conjecture \ref{RC} for noncommutative unital C*-algebras  we use the notion of Manin matrices. We refer \cite{CHERVOVFALQUIRUBTSOV, CHERVOVFALQUI} for the basics of Manin matrices.
\begin{conjecture}\label{NRC}
	\textbf{[(Noncommutative) Modular Bourgain-Tzafriri Restricted Invertibility Conjecture]
		Let $\mathcal{A}$ be a  unital  C*-algebra and $\mathcal{I}(\mathcal{A})$ be the set of all invertible elements of $\mathcal{A}$.   For $d\in \mathbb{N}$, let $ \mathbb{MM}_{d\times d}(\mathcal{A})$ be the set of all $d$ by $d$ Manin matrices over $\mathcal{A}$. For  $ M\in \mathbb{MM}_{d\times d}(\mathcal{A})$, let $\det^\text{column} (M) $ be the Manin determinant of $M$ by column expansion. Let $\mathcal{A}^d$ be the standard (left) Hilbert C*-module over 	$\mathcal{A}$ and    $\{e_j\}_{j=1}^d$ be the  canonical orthonormal basis for $\mathcal{A}^d$. There are universal real  constants $A>0$, $c>0$ ($A$ and $c$ may  depend upon C*-algebra $\mathcal{A}$) satisfying the following property. If  $d \in \mathbb{N}$ and  $ M \in \mathbb{MM}_{d\times d}(\mathcal{A})$    with  $\langle Me_j, Me_j \rangle =1$, $\forall 1\leq j\leq d$ and $\det^\text{column} (M) \in \mathcal{I}(\mathcal{A})\cup\{0\}$, then there exists a subset $\sigma \subseteq \{1, \dots, d\}$ of cardinality 
		\begin{align*}
			\operatorname{Card}(\sigma)\geq \frac{cd}{\|M\|^2}
		\end{align*}
		such that 
		\begin{align*}
			\sum_{j \in \sigma}\sum_{k \in \sigma}a_j \langle Me_j, Me_k\rangle a_k^*=\left\langle \sum_{j \in \sigma}a_jMe_j, \sum_{k \in \sigma}a_kMe_k\right\rangle \geq A \sum_{j \in \sigma}a_ja_j^*, \quad \forall a_j \in  \mathcal {A}, \forall j \in \sigma, 
		\end{align*}	
		where $\|M\|$ is the norm of the Hilbert C*-module homomorphism defined by $M$ as $M:\mathcal{A}^d \ni x \mapsto Mx \in \mathcal{A}^d$.} 
\end{conjecture}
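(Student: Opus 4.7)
The plan is to adapt one of the modern proofs of the classical Bourgain--Tzafriri Restricted Invertibility Theorem to Hilbert C*-modules, exploiting the Manin matrix structure to compensate for non-commutativity of $\mathcal{A}$. The two most promising starting points are the interlacing polynomials method of Marcus--Spielman--Srivastava (MSS) and the random-signs / matrix concentration approach of Tropp. The MSS route is attractive because Manin matrices admit a column determinant $\det^\text{column}$ satisfying many classical identities (multilinearity in rows, expansion formulas, multiplicativity under suitable commuting conditions), so one can hope to build an analog of the characteristic polynomial of $M^*M$ whose coefficients lie in the center of a well-chosen subalgebra of $\mathcal{A}$.

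First I would attempt to reduce to the commutative case stated in Conjecture~\ref{RC}. Given $M \in \mathbb{MM}_{d\times d}(\mathcal{A})$ with $\det^\text{column}(M) \in \mathcal{I}(\mathcal{A}) \cup \{0\}$ and $\langle Me_j, Me_j\rangle = 1$ for all $j$, the inner products $\{\langle Me_j, Me_k\rangle\}_{j,k}$, together with the central quantities forced by the Manin commutation relations, generate a countably generated C*-subalgebra $\mathcal{B} \subseteq \mathcal{A}$. Everything appearing in the conclusion, namely the Gram sums $\sum a_j\langle Me_j, Me_k\rangle a_k^*$ and the operator norm $\|M\|$, already lives in $\mathcal{B}$, so by a faithful GNS representation and passage to a maximal commutative C*-subalgebra of $\mathcal{B}$ containing these data one should be able to invoke Conjecture~\ref{RC} directly.

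Second, for the commutative case I would pass through Gelfand duality, writing $\mathcal{A} = C(X)$ for some compact Hausdorff $X$ and viewing $M$ as a continuous map $X \to M_d(\mathbb{C})$ with $\|M(x)\| \le \|M\|$ pointwise. For each $x$, the classical Bourgain--Tzafriri theorem furnishes $\sigma(x) \subseteq \{1, \dots, d\}$ of cardinality at least $cd/\|M\|^2$ such that the columns of $M(x)$ indexed by $\sigma(x)$ form a Riesz sequence with lower bound $A$. Since $\sigma(\cdot)$ takes values in the finite set $2^{\{1,\dots,d\}}$, a pigeonhole-compactness argument picks out some $\sigma_0$ witnessing the conclusion on a dense subset of $X$; continuity of the associated Gram operator in $x$ then propagates the Riesz bound to all of $X$, yielding a uniform $\sigma$.

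The hard part will be exactly this uniformization: the classical Bourgain--Tzafriri selection is not continuous in $M$, and a naive pigeonhole only produces a $\sigma_0$ valid on a set of positive measure, not necessarily one that achieves the sharp constant $c/\|M\|^2$ uniformly. I expect one would have to either weaken $c$ slightly to absorb the loss from covering arguments, or bypass column selection entirely by developing an operator-valued MSS interlacing theory, whose existence in the required generality is itself a substantial open problem. For the genuinely non-commutative Conjecture~\ref{NRC}, a further obstruction is that $\det^\text{column}$ is well behaved under row but not column manipulations, so forming $M^*M$ and extracting spectral information from it is delicate; this is why, as the author's concluding remark already anticipates, the cleanest first case is likely to be W*-algebras with sufficiently large center.
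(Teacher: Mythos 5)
The statement you are addressing is posed in the paper as an open conjecture, not a theorem: the paper supplies no proof, and its accompanying remark records the author's own suspicion that the conjecture may fail in this generality and should first be attempted for W*-algebras or C*-algebras with the IBN property. Your text must therefore be judged as a proposed proof where none exists, and as it stands it is a research programme rather than a proof; you concede as much in your final paragraph. Two of the gaps are fatal in their current form. First, the reduction to Conjecture \ref{RC} does not go through: the Gram entries $\langle Me_j, Me_k\rangle=\sum_i (Me_j)_i (Me_k)_i^*$ involve adjoints of the matrix entries, and neither the Manin relations (which constrain commutators of the entries themselves, not of entries with adjoints of entries) nor anything else in the hypotheses places these elements inside a commutative C*-subalgebra, so ``a maximal commutative C*-subalgebra of $\mathcal{B}$ containing these data'' need not exist. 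Even if it did, Conjecture \ref{RC} is itself unproven in the paper, so the argument would at best be a conditional reduction, and one would still have to check that the commutative image of $M$ inherits the normalization $\langle Me_j,Me_j\rangle=1$, the operator-norm bound, and the determinant hypothesis.

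Second, the Gelfand-duality step for the commutative case is exactly where the difficulty lives, and you have not resolved it. Writing $\mathcal{A}=C(X)$ and applying the classical Bourgain--Tzafriri theorem pointwise produces a subset $\sigma(x)$ for each $x\in X$; the sets on which a fixed $\sigma_0$ works are closed and cover $X$, but a compact Hausdorff space covered by finitely many closed sets need not be contained in any single one of them, so the pigeonhole you invoke yields a $\sigma_0$ valid only on a proper closed subset, with no mechanism offered for gluing selections across $X$ or for trading constants to obtain a uniform $\sigma$. Since the entire content of the conjecture is precisely this uniformity over the spectrum (pointwise the statement is just the classical theorem), the proposal does not yet establish anything beyond the case $\mathcal{A}=\mathbb{C}$. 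A complete argument would require either a genuinely operator-valued selection principle (for instance an interlacing-polynomials theory over $C(X)$ or over a von Neumann algebra) or a quantitative stability version of restricted invertibility strong enough to survive a covering argument after shrinking $c$ and $A$ --- neither of which is supplied, and the second of which you correctly flag as the hard part without closing it.
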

\begin{remark}
	\begin{enumerate}[\upshape (i)]
		\item \textbf{We can surely formulate Conjecture \ref{RC} by removing the condition $\det (M) \in \mathcal{I}(\mathcal{A})\cup\{0\}$ and Conjecture \ref{NRC} by removing the condition Manin matrices and $\det^\text{column} (M) \in \mathcal{I}(\mathcal{A})\cup\{0\}$. But we strongly  believe that Conjectures  \ref{RC} and \ref{NRC}  fail with this much of generality.}
		\item If Conjecture \ref{RC} holds but Conjecture \ref{NRC} fails,  then we  can try Conjecture \ref{NRC} for W*-algebras or C*-algebras with  IBN property.
	\end{enumerate}
\end{remark}
\begin{problem}\label{MODULARPROBLEM}
	\textbf{Let $\mathscr{A}$ be the set of all unital C*-algebras. What is the best function $\phi: \mathscr{A}\times (0,1)\times \mathbb{N} \to (0,\infty)$ satisfying the following. Let $\mathcal{A}$ be a unital C*-algebra. 	There is a universal constant $C>0$ (which may depend upon $\mathcal{A}$) satisfying the following. Let $0<\varepsilon<1$, $M,N \in \mathbb{N}$ and $\mathbf{x}_1, \mathbf{x}_2, \dots, \mathbf{x}_M\in \mathcal{A}^N$. For each natural number 
		\begin{align*}
			m>C\phi (\mathscr{A}, \varepsilon, M)
		\end{align*}
		there exists a matrix $M\in \mathbb{M}_{m\times N}(\mathcal{A})$ such that 
		\begin{align*}
			(1-\varepsilon)\langle \mathbf{x}_j-\mathbf{x}_k, \mathbf{x}_j-\mathbf{x}_k\rangle \leq \langle M(\mathbf{x}_j-\mathbf{x}_k), M(\mathbf{x}_j-\mathbf{x}_k)\rangle \leq (1-\varepsilon)\langle \mathbf{x}_j-\mathbf{x}_k, \mathbf{x}_j-\mathbf{x}_k\rangle, \quad \forall 1 \leq j, k\leq M.
	\end{align*}}
\end{problem}
A particular case of Problem \ref{MODULARPROBLEM} is the following  conjecture.
\begin{conjecture}\label{MJLC}
	\textbf{(Modular Johnson-Lindenstrauss Flattening Conjecture)
		Let $\mathcal{A}$ be a unital C*-algebra. 	There is a universal constant $C>0$ (which may depend upon $\mathcal{A}$) satisfying the following. Let $0<\varepsilon<1$, $M,N \in \mathbb{N}$ and $\mathbf{x}_1, \mathbf{x}_2, \dots, \mathbf{x}_M\in \mathcal{A}^N$. For each natural number 
		\begin{align*}
			m>\frac{C}{\varepsilon^2}\log M,
		\end{align*}
		there exists a matrix $M\in \mathbb{M}_{m\times N}(\mathcal{A})$ such that 
		\begin{align*}
			(1-\varepsilon)\langle \mathbf{x}_j-\mathbf{x}_k, \mathbf{x}_j-\mathbf{x}_k\rangle \leq \langle M(\mathbf{x}_j-\mathbf{x}_k), M(\mathbf{x}_j-\mathbf{x}_k)\rangle \leq (1-\varepsilon)\langle \mathbf{x}_j-\mathbf{x}_k, \mathbf{x}_j-\mathbf{x}_k\rangle, \quad \forall 1 \leq j, k\leq M.
	\end{align*}}	
\end{conjecture}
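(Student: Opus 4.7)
The approach is to work first in the commutative setting and convert the problem to a uniform classical Johnson--Lindenstrauss problem over the Gelfand spectrum. Invoke Gelfand--Naimark to identify $\mathcal{A}$ with $C(X)$ for a compact Hausdorff space $X$, so that $\mathcal{A}^N$ consists of continuous maps $\mathbf{y}\colon X \to \mathbb{C}^N$ and the modular inner product $\langle \mathbf{y},\mathbf{y}\rangle$ corresponds to the continuous function $x \mapsto \|\mathbf{y}(x)\|^{2}$. An element $\mathbf{M} \in \mathbb{M}_{m\times N}(\mathcal{A})$ becomes a continuous family $\mathbf{M}(x) \in \mathbb{C}^{m\times N}$. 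Since positivity in $\mathcal{A}$ coincides with pointwise nonnegativity in $C(X)$, the desired two-sided operator inequality unwinds to the uniform statement
\[
(1-\varepsilon)\|\mathbf{y}(x)\|^{2} \leq \|\mathbf{M}(x)\mathbf{y}(x)\|^{2} \leq (1+\varepsilon)\|\mathbf{y}(x)\|^{2}, \qquad \forall\, x \in X,
\]
for every difference $\mathbf{y} = \mathbf{x}_j - \mathbf{x}_k$.

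The first attempt would be the crudest possible: take a constant scalar matrix $\mathbf{M}(x) \equiv G$ sampled from the standard Gaussian Johnson--Lindenstrauss ensemble in dimension $m = \lceil C\varepsilon^{-2}\log M \rceil$. For each fixed $x$ the finite set of $M^{2}$ vectors $(\mathbf{x}_j-\mathbf{x}_k)(x)$ is distorted by at most $\varepsilon$ with high probability, by the classical Johnson--Lindenstrauss lemma. The obstruction, and the main hurdle I would flag, is that $X$ is typically uncountable, so no finite union bound collapses the infinitely many events into a single high-probability event; covering the compact images $K_{jk} \coloneqq \{(\mathbf{x}_j-\mathbf{x}_k)(x) : x \in X\}$ by a $\delta$-net gives a bound but forces the dimension to depend on the metric entropy of $\bigcup_{j,k} K_{jk}$, which is generically of order $N$ and destroys the pure $\log M$ scaling demanded by the conjecture.

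To recover the conjectured dimension one must allow $\mathbf{M}$ to have genuinely $\mathcal{A}$-valued entries and select, at each $x \in X$, a classical Johnson--Lindenstrauss matrix $\mathbf{M}(x)$ adapted to the fiber $\{(\mathbf{x}_j - \mathbf{x}_k)(x)\}_{j,k}$; the local data can then be glued into a continuous element of $\mathbb{M}_{m \times N}(\mathcal{A})$ either via a Michael-type continuous selection theorem (the set of scalar $m \times N$ matrices satisfying the $M^{2}$ open inequalities at a given $x$ is open, and Gaussian averaging shows its convex hull is nonempty) or, more constructively, via a partition-of-unity argument on a finite open cover of $X$ chosen fine enough that a single Gaussian matrix certifies Johnson--Lindenstrauss on every fiber within each patch. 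Verifying that the glued $\mathbf{M}$ satisfies the operator inequalities in $\mathcal{A}$ then comes for free from pointwise positivity in $C(X)$, and the dimension $m$ is left untouched because the constant $C$ can be chosen uniformly for all fibers.

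I expect the genuinely noncommutative case, which does not submit to a Gelfand reduction, to require substantially new ideas: an operator-valued analogue of the Gaussian concentration underlying classical Johnson--Lindenstrauss, perhaps accessed via free-probability random-matrix bounds over $\mathcal{A}$-valued ensembles, together with the W*-algebra or IBN structural hypotheses flagged in the preceding remark, appears to be the natural route. In that generality even the reduction performed above is unavailable, so the commutative argument should be viewed only as evidence for Conjecture \ref{MJLC} rather than a template for its full proof.
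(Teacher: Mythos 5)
The first thing to note is that the paper contains no proof of this statement: it is posed as an open conjecture (the author only remarks that he believes it at least for W*-algebras and for C*-algebras with the IBN property), so there is no argument of the paper's to compare yours against. Your proposal must therefore stand on its own, and even restricted to the commutative case it has a genuine gap at the gluing step; the noncommutative case, which is what the conjecture actually asserts, you leave untouched by your own admission.

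The Gelfand reduction itself is fine: for $\mathcal{A}=C(X)$ the two-sided inequality in $\mathcal{A}$ is equivalent to the uniform fiberwise statement you write down, and you correctly identify the obstruction as the absence of a union bound over an uncountable $X$. The problem is that both devices you propose for passing from fiberwise Johnson--Lindenstrauss matrices to a single continuous $\mathbf{M}\in\mathbb{M}_{m\times N}(C(X))$ rely on convexity that is absent. For a fixed finite set of vectors $v$, the set of matrices $G$ with $(1-\varepsilon)\|v\|^2\leq\|Gv\|^2\leq(1+\varepsilon)\|v\|^2$ is open but \emph{not} convex: the upper bound is a convex constraint on $G$, but the lower bound is the complement of a convex set --- $G$ and $-G$ both satisfy it while their midpoint $0$ does not. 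Consequently a partition-of-unity average $\sum_\alpha\varphi_\alpha(x)G_\alpha$ of locally good matrices can fail the lower distortion bound on the overlaps, and Michael's selection theorem does not apply, since it requires the set-valued map to have convex values; knowing that the convex hull is nonempty is useless, because a selection landing in the convex hull may violate the constraints. The natural repair --- gluing the Gram matrices $G_\alpha^*G_\alpha$, whose admissible set \emph{is} convex --- destroys the fiberwise rank bound $\leq m$ on overlaps, and that rank bound is precisely the dimension-reduction content of the conjecture. Until the selection/gluing step is replaced by an argument that respects both the lower distortion bound and the rank constraint, even the commutative case remains open.
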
  
\begin{remark}
	We believe that Conjecture \ref{MJLC}  holds  at least for W*-algebras (von Neumann algebras) or C*-algebras with IBN property. 	
\end{remark}

  \begin{remark}
  	\textbf{Modular Welch bounds} are derived in \cite{MAHESHKRISHNA} and a collection of problems for Hilbert C*-modules including \textbf{Modular Zauner conjecture} have  been formulated there.
  \end{remark}

 \bibliographystyle{plain}
 \bibliography{reference.bib}

\end{document}